\newtheorem{theorem}{Theorem}[section]
\newtheorem{proposition}[theorem]{Proposition}
\newtheorem{corollary}[theorem]{Corollary}
\newcommand{\ode}{_{^\text{ODE}}}
\theoremstyle{definition}
\theoremstyle{remark}
\numberwithin{equation}{section}
\newcommand{\iii}{{\vert\kern-0.25ex\vert\kern-0.25ex\vert }}
\newcommand{\R}{\mathbb{R}}
\newcommand{\N}{\mathbb{N}}
\newcommand{\essinf}{\mathop{\rm ess\ inf}}
\newcommand{\esssup}{\mathop{\rm ess\ sup}}
\begin{document}

\title[Decompositions of Nakano norms by ODE techniques]{Decompositions of Nakano norms\\ by ODE techniques}

\author{Jarno Talponen}
\address{University of Eastern Finland, Department of Physics and Mathematics, Box 111, FI-80101 Joensuu, Finland}
\email{talponen@iki.fi}

\keywords{Variable Lebesgue space, Musielak-Orlicz space, Nakano norm, ordinary differential equation, norm inequality, embedding theorem}
\subjclass[2010]{26D20, 46E30, 46E35}
\date{\today}

\begin{abstract}
We study decompositions of Nakano type varying-exponent Lebesgue norms and spaces. These function spaces are 
represented here in a natural way as tractable varying-exponent $\ell^p$ sums of projection bands. The main results involve embedding the variable Lebesgue spaces to such sums, as well as the corresponding isomorphism constants. 

The main tool applied here is an equivalent variable Lebesgue norm which is defined by a suitable ordinary differential equation introduced recently by the author. We also analyze the effect of transformations changing the ordering of the unit interval on the values of the ODE-determined  norm.
\end{abstract}

\maketitle

\section{Introduction}
In this paper we analyze Nakano type varying-exponent $L^p$ norms and their decompositions. 
Alternatively, we study embedding results of the above Nakano spaces to some more tractable Banach spaces
which arise as varying $\ell^p$ summands of classical $L^p$ spaces. We approach this topic via an alternative equivalent norm on the Nakano space defined by the means of weak solutions to suitable ordinary differential equations (ODE). 
These can be easily analyzed by studying the properties of the corresponding ODEs and thus by virtue of equivalence of the norms we obtain strong estimates for Nakano type variable Lebesgue norms. 

Motivated by Nakano's work (\cite{Nakano}), we call the following special case of Luxemburg (\cite{lux}) norm on a Musielak-Orlicz space (\cite{mus}, cf. \cite{bo}) a Nakano norm:
\[\iii f\iii_{p(\cdot)} := \inf \left\{\lambda>0 \colon \rho(f/\lambda) \leq 1 \right\}\]
where 
\[\rho(g) :=\int \frac{1}{p(t)} |g(t)|^{p(t)}\ dt .\]
See \cite{Maligranda} for a historical account. 

The Luxemburg norm clearly arises as an application of the Minkowski functional. It is not that easy to analyze the 
Luxemburg norms, even the numerical value of the norm of a constant function cannot be calculated 
immediately based on the definition of the norm. Here we consider an alternative approach  to variable Lebesgue 
norms which is rather recursive than global.

The above weight $w(t)=\frac{1}{p(t)}$, appearing in Nakano's work, catches the eye
(see also discussion in \cite[Sect. 3.1]{book}). In a sense it involves the continuity properties of the mapping $\lambda \mapsto \rho(f/\lambda)$. In Section \ref{sect: CM} we provide further motivation and discussion. This weight also has a special role in our ODEs studied. These provide a novel approach to the varying-exponent Lebesgue spaces which do \emph{not} involve the Minkowski functional at all.

There is a vast literature involving inequalitites and embeddings on the variable Lebesgue spaces. For an important paper initiating the investigation of varying-exponent Sobolev spaces, see \cite{kovacik}. Different kinds of inequalities are in the core of the study of Lebesgue and Sobolev spaces. These are essential for instance in classical integral operators, harmonic analysis and spaces of analytic functions.
These spaces involve for instance the Poincar\'e-like inequalities carrying the names of Morrey, Nash and Gagliardo-Nirenberg-Sobolev. The norms of a Riesz operator and a potential are related by the Hardy-Littlewood-Sobolev lemma.
We refere to  \cite{UF,UFMP,CW,MSS} for samples of papers concentrating on these and other important inequalities. 
See also \cite{CFN,die0, DHH,Ho,RS} for works which rely on techniques with central inequalities.
All the mentioned inequalities deal with $L^p$ structures which is a very particular case in the realm of Banach spaces. The exponent $p$ may additionally vary locally but nevertheless these structures are detectable. Apart from the typical applications in mathematical analysis, the variable Lebesgue spaces have been recently an object of study in mathematical logic, see e.g. \cite{bej}. 

The main purpose of this paper is to provide natural tools for treating the $L^p$ structure of 
variable Lebesgue spaces and to demonstrate the usefulness of ODE-techniques in this connection.

Because of the way Nakano norms, and, more generally, Luxemburg norms are defined, it is not so easy to identify a priori the 'contribution' of different parts of the function to the norm; that is, how the restriction of the support of the function affects the norm. Namely, if $f \in L^{p(\cdot)} [0,1]$ (with any weight) and $\Delta \subset [0,1]$ is measurable, possibly disjoint from the support of $f$, it is not easy in general to evaluate efficiently the relationship between 
$\iii f \iii_{p(\cdot)}$ and $\iii f + 1_\Delta \iii_{p(\cdot)}$ quantitatively.

It seems reasonable to ask how the Nakano norm can be decomposed in a natural way. For instance, if $p(\cdot)$ has constant values, say, $p_1 , \ldots , p_n$, on some sets of positive measure, one might ask what is the quantitative relationship between the following Nakano norms:
\[\iii  f \iii_{p(\cdot)}\quad \text{and}\quad \iii  1_{p(\cdot) \neq p_1 , \ldots , p_n} f\iii  _{p(\cdot)}\ , \ 
\iii  1_{p(\cdot) = p_1} f\iii  _{p(\cdot)}\ ,\ \ldots ,\ \iii  1_{p(\cdot) = p_n} f\iii  _{p(\cdot)}?\]
If the correspondence is natural enough, it should not depend on $n$, since the measure space is atomless.
The relationship should be somehow formalized mathematically and the simplest way is stating
\[\iii  f\iii  _{p(\cdot)} \sim \iii  1_{p(\cdot) \neq p_1 , \ldots , p_n} f\iii  _{p(\cdot)} + \iii  1_{p(\cdot) = p_1} f\iii  _{p(\cdot)}+ \ldots + \iii  1_{p(\cdot) = p_n} f\iii  _{p(\cdot)},\]
which, unfortunately, turns out to be the wrong approach after a short reflection, unless $p(\cdot)=1$. A reasonable approach is replacing above the 
addition operations by suitable operations corresponding to $\ell^p$-summands applying the respective values of $p_i$. It may be instructive to observe that 
\[\|f\|_{p} = \left(\|1_\Delta f\|_{p}^p + \|1_{[0,1]\setminus \Delta} f\|_{p}^p \right)^{\frac{1}{p}}\]
for any $f\in L^p [0,1]$, $1\leq p<\infty$, and a measurable subset $\Delta \subset [0,1]$. Thus the correct operation in this simple case appears to be
\[a \boxplus_p b := (a^p + b^p )^{\frac{1}{p}},\quad a,b\geq 0,\] 
instead of the $+$ operation. This leads to the relationship
\begin{multline*}
\iii  f\iii  _{p(\cdot)} \sim
 (\ldots ((\iii  1_{p(\cdot) \neq p_1 , \ldots , p_n} f\iii  _{p(\cdot)} \boxplus_{p_1} 
\iii  1_{p(\cdot) = p_1} f\iii  _{p(\cdot)}) \boxplus_{p_2} \ldots\\
 \ldots \boxplus_{p_{n-1}} \iii  1_{p(\cdot) = p_{n-1}} f\iii  _{p(\cdot)} ) \boxplus_{p_n} \iii  1_{p(\cdot) = p_n} f\iii  _{p(\cdot)}.
\end{multline*}
We will prove in this paper several inequalities which substantiate the above equivalence as a simple case of a more 
general natural principle (see Theorem \ref{thm: main_decomp}). The obtained isomorphism constants do not depend on $n$. We also analyze the effect of changing the order in taking the operations above. 

The main tool is an equivalent ODE-determined Lebesgue norm introduced by the author in \cite{Tal}, extending 
the class of classical $L^p$ norms. This construction can be seen as a continuous version of a recursively constructed varying-exponent $\ell^p$ norm. It appeared in \cite{Sobczyk} as a remark and was later studied in \cite{talponen}, 
cf. \cite{kalton1, kalton2}. 
Unlike the Luxemburg style variable Lebesgue norm, the ODE-determined norm satisfies properly the H\"{o}lder inequality and the dualities are neat. The norm determining ODE is rather simple:
\[\varphi_f (0 )=0 ,\ \varphi_{f}'(t)=\frac{|f(t)|^{p(t)}}{p(t)}\varphi_{f} (t)^{1-p(t)}\quad \mathrm{for\ almost\ every}\ t\in [0,1].\]
It turns out that in some cases arguing by means of the above differential equation makes the analysis of the norms tractable.

Representing the variable Lebesgue norms by using projection bands is useful in the analysis of operators
on these spaces. Recall that a projection band is a subspace of functions supported on a given measurable subset, cf. \cite{LTII}.
For instance, the $p$-convexifications or $\ell^p$-type sums of functions are instrumental in 
\cite{DHH} and \cite{MSS}. We obtain natural decompositions of Lebesgue spaces which yield upper and lower norm
estimates for variable Lebesgue norms. This likely simplifies the analysis of the various operators acting on these spaces
and otherwise reduces norm estimates to a combination of well-understood `blocks' with classical $L^p$ norms.

\section{On variable Lebesgue spaces}
We rely heavily on the properties and the theoretical framework of ODE-determined norms appearing in \cite{Tal}.
See \cite{DE_book,book, FA_book, Maligranda, mus, rr} for other suitable background information. 

In what follows $f, p \in L^0 [0,1]$, i.e. $f, p\colon [0,1]\to \R$ are measurable functions, and $p(\cdot)\geq 1$.
We denote by $\overline{p}=\esssup p(\cdot)$ if $p(\cdot)$ is essentially bounded. Denote $a \boxplus_\infty b = \max(a,b)$ for $a,b\geq 0$.

We will study a special type of variable Lesbesgue norm, namely the Nakano norm:
\[\iii f\iii_{p(\cdot)} := \inf \left\{\lambda>0 \colon \int_{0}^1 \frac{1}{p(t)} \frac{|g(t)|^{p(t)}}{\lambda^{p(t)}}\ dt \leq 1 \right\}.\]

In the literature the variable Lebesgue spaces are considered over $n$-dimensional space, say, 
expressed in a rather high degree of generality, spaces of the type $L^{p(\cdot)} (\R^n, \mathrm{Bor}(\R^n), m_n , \omega)$ where $(\R^n, \mathrm{Bor}(\R^n), m_n )$ is a standard measure space involving the Lebesgue measure 
and the Borel $\sigma$-algebra on $\R^n$, 
$\omega \colon \R^n \to [0,\infty)$ is a measurable 'weight' function (possibly restricting the essential domain) and the modular is given by    
\[\rho(g) :=\int_{\R_n} \omega (t) |g(t)|^{p(t)}\ dm_n (t) .\]
Alternatively, one could apply in the integral a measure $\mu \colon \mathrm{Bor}(\R^n) \to \R$ defined by
\[\mu(A)=\int_{A} \omega (t) \ dm_n (t) ,\quad A \in \mathrm{Bor}(\R^n).\]
Note that this defines a $\sigma$-finite measure. We refer to \cite{Fremlin} for advanced measure theory.
Our methodology in this paper, being an application of ODEs, requires totally ordered measure spaces, essentially the case $n=1$. 
This feature appears to narrow down the applicability of these ODE-defined spaces. 

Unexpectedly, it turns out that this is not the case, at least in what comes to the order-isometric structure of these spaces, which is the topic here.

\begin{corollary}\label{cor: isom}
The dimension $n$ in $L^{p(\cdot)} (\R^n, \mathrm{Bor}(\R^n), m_n , \omega)$ is isometrically order-isomorphically redundant
due to suitable rearrangements of the underlying measure spaces. More precisely, there exists an isometric isomorphism
\[T\colon L^{p(\cdot)} (\R^n, \mathrm{Bor}(\R^n), m_n , \omega) \to L^{r(\cdot)} ([0,1], \mathrm{Bor}([0,1]), m_1 , \omega') \] 
having the form
\[(T f)[h(x)] = a(x) f[x] \quad \text{for}\ \mu\text{-a.e.}\ x\in \R^n \] 
where
\[ r(h(x)) =p(x)\quad \text{and} \quad    \omega' (h(x)) = \omega(x) \quad \text{for}\ \mu\text{-a.e.}\ x\in \R^n ,\]
$a\colon \mathrm{supp}(\mu)  \to (0,\infty)$ is a measurable function, 
and $h \colon \mathrm{supp}(\mu) \to [0,1]$ a bijection.
\end{corollary}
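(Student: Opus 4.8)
The plan is to reduce everything to a measure-isomorphism statement and then transport the norm. The key observation is that the Nakano/Musielak-Orlicz norm on $L^{p(\cdot)}(\R^n,\mathrm{Bor}(\R^n),m_n,\omega)$ depends on the data only through the $\sigma$-finite measure $\mu(A)=\int_A \omega\,dm_n$ together with the measurable function $p(\cdot)$ on $\mathrm{supp}(\mu)$; indeed the modular is $\rho(g)=\int |g|^{p}\,d\mu$ (absorbing the weight $1/p$ into $\omega$ if one wishes, or carrying it along—it changes nothing structural). So I would first fix attention on the measure space $(\mathrm{supp}(\mu),\mathcal{B},\mu)$, which is a $\sigma$-finite, separable (countably generated), nonatomic--or possibly with atoms--standard measure space. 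The target side is $([0,1],\mathrm{Bor}([0,1]),m_1,\omega')$, i.e.\ the measure $\mu'(B)=\int_B \omega'\,dm_1$.

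The main step is the construction of the point map $h$. First I would reduce to the case that $\mu$ is nonatomic: the $n$-dimensional Lebesgue--weighted measure is nonatomic on the part where $\omega>0$ has no Dirac mass, which is automatic here since $m_n$ is nonatomic, so $\mu$ is genuinely nonatomic on all of $\mathrm{supp}(\mu)$. By the classical isomorphism theorem for standard measure spaces (Halmos--von Neumann; see \cite{Fremlin}), any two $\sigma$-finite nonatomic standard measure spaces of the same total mass are isomorphic mod null sets via a bijective bimeasurable map. Here a subtlety arises: I do not merely want \emph{some} target measure; I want the target to literally be Lebesgue measure on $[0,1]$ re-weighted by some $\omega'$, \emph{and} simultaneously I need $p$ to be transported to a measurable $r$ on $[0,1]$. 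The clean way is: choose $h\colon \mathrm{supp}(\mu)\to[0,1]$ to be a bijective bimeasurable map that pushes $\mu$ forward to a nonatomic $\sigma$-finite Borel measure $\nu$ on $[0,1]$ with $\nu([0,1])=\mu(\mathrm{supp}(\mu))$ (if this total mass is infinite, first split into countably many finite pieces and place them on $[1/(k{+}1),1/k)$, or simply allow $\omega'$ to be unbounded so that an infinite-mass measure still has a density with respect to $m_1$ — one must be a little careful and I will comment on this below). Then set $\omega'=\frac{d\nu}{dm_1}$ (Radon--Nikodym derivative, which exists once $\nu\ll m_1$; this forces the choice of $h$ to be made so that the pushforward is absolutely continuous — concretely, realize the standard-space isomorphism through the distribution function, which automatically gives a measure $\ll m_1$), and put $r=p\circ h^{-1}$, $\omega'\circ h = \omega$ $\mu$-a.e.

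With $h$, $r$, $\omega'$ in hand, I would define the density factor $a$ and the operator $T$. For $Tf$ to be an isometry between the two modulars, we need $\int_{\R^n}\omega\,|f|^{p}\,dm_n=\int_{[0,1]}\omega'\,|Tf|^{r}\,dm_1$ for every $f$, i.e.\ after the change of variables $y=h(x)$ the modular of $Tf$ should equal the modular of $f$; since $(Tf)[h(x)]=a(x)f[x]$ and $\mu$ transports to $\nu=\omega'\,dm_1$, this is just $\int |f|^{p}\,d\mu = \int |a f|^{p}\,d\mu\circ h^{-1}$-style bookkeeping, which forces $a(x)\equiv 1$ when the weight $1/p$ is carried along unchanged — or, if one insists that $\omega'$ be the full density including the $1/r$ factor arranged differently, $a$ is an explicit power of a Radon--Nikodym factor. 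In any case $\rho'(Tf)=\rho(f)$ for all $f$, hence $\iii Tf\iii_{r(\cdot)}=\iii f\iii_{p(\cdot)}$ by the very definition of the Luxemburg/Nakano norm via its modular, and $T$ is a bijection with inverse of the same type; positivity and linearity (hence order-isomorphism) are immediate from the formula $(Tf)[h(x)]=a(x)f[x]$ with $a>0$.

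The main obstacle, and the only place real care is needed, is the \emph{simultaneous} control of three things: that $h$ be a genuine bijection (not just mod null sets) onto all of $[0,1]$, that the pushforward of $\mu$ be absolutely continuous with respect to $m_1$ so that $\omega'$ exists as an honest density, and that the total mass / $\sigma$-finiteness be accommodated on the bounded interval $[0,1]$. The bijectivity-everywhere issue is handled by the standard trick of discarding a countable set and re-indexing (a null-set adjustment that does not affect $L^{p(\cdot)}$), or by invoking the precise form of the Halmos--von Neumann isomorphism that \emph{is} a bijection after removing atoms of the Borel $\sigma$-algebra; the absolute-continuity issue is handled by routing the isomorphism through a strictly increasing ``distribution function'' built from $\mu$, which automatically produces a measure with a density; and the infinite-mass case is handled either by allowing $\omega'$ to blow up near an endpoint of $[0,1]$ or by a countable decomposition of $\mathrm{supp}(\mu)$ into finite-measure pieces laid out in disjoint subintervals. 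Once these measure-theoretic points are dispatched, the norm identity $\rho'\circ T=\rho$ and the conclusion are formal.
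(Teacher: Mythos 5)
Your overall skeleton -- realize $(\mathrm{supp}(\mu),\mu)$ as an atomless, countably separated, $\sigma$-finite standard measure space, invoke a point-isomorphism theorem to obtain a bijection $h$ onto an interval, and then transport the modular by a change of variables -- is the same as the paper's, which uses Fremlin's isomorphism theorem (344I) to get a measure-preserving bijection onto $[0,\mu(\R^n))$ and then the explicit compression $t\mapsto 1-\tfrac{1}{1+t}$ onto $[0,1)$ with a compensating factor. The genuine gap is in your bookkeeping of where the Radon--Nikodym correction goes. You simultaneously set $\omega'=\frac{d(h_*\mu)}{dm_1}$ and assert $\omega'\circ h=\omega$ $\mu$-a.e., and on that basis conclude that $a\equiv 1$; these requirements are incompatible in general. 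The identity $\omega'(h(x))=\omega(x)$ is part of the statement being proved, so $\omega'$ is not free to absorb the pushforward density. Once $\omega'\circ h=\omega$ is imposed, equality of the modulars forces
\[
a(x)^{p(x)}=\frac{dm_n}{d\left(m_1\circ h\right)}(x)\qquad \mu\text{-a.e.},
\]
i.e. $a$ must be the $1/p(x)$-th power (a \emph{varying} power) of the inverse Jacobian of $h$; the weight $\omega$ cancels from both sides, so this conclusion does not depend on whether one carries $1/p$ inside $\omega$ or not. In particular $a\equiv 1$ would require $m_1(h(A))=m_n(A)$ for all measurable $A\subset\mathrm{supp}(\mu)$, hence $m_n(\mathrm{supp}(\mu))=1$, which fails in the typical (e.g. infinite-Lebesgue-mass) situation. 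This varying power is precisely the factor $(1+t)^{2/r(t)}$ in the paper's Proposition 5.2, and it is the reason the corollary allows a nontrivial $a$ at all.

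For the same reason, your two devices for infinite total mass -- letting $\omega'$ blow up near an endpoint, or laying finite-mass pieces on shrinking subintervals with a re-weighted density -- are not available as stated, since $\omega'$ is pinned by $\omega$ through $h$; the mass must instead be handled by a non-measure-preserving $h$ whose distortion is compensated inside $a$. You do mention the correct mechanism in passing (``$a$ is an explicit power of a Radon--Nikodym factor''), but you present it as an optional alternative rather than as the required construction, and you never verify that the needed power is $1/p(x)$ and that the resulting $a$ is measurable and positive. With that correction made explicit, your argument coincides with the paper's two-step proof (Theorem 5.1 plus Proposition 5.2); the remaining measure-theoretic points you raise (everywhere-defined bijection, absolute continuity via a distribution-function realization, $\sigma$-finiteness) are handled in the paper by applying Fremlin's theorem to the completed spaces, and your Halmos--von Neumann route is an acceptable substitute for that step.
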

This follows form the results given in the Final Remarks (section \ref{sect: red}) where we also provide a comprehensive
description of the functions above.

\subsection{ODE-determined norms}

The main tool in analyzing the Nakano norms here is passing to a tractable norm, defined by means of an ODE, such that the new norm $\|\cdot\|_{L_{\mathrm{ODE}}^{p(\cdot)}}$ 
is equivalent to the Nakano norm:
\[\iii f\iii_{p(\cdot)}  \leq \|\cdot\|_{L_{\mathrm{ODE}}^{p(\cdot)}} \leq 2 \iii f\iii_{p(\cdot)}, \]
see \cite[Prop. 3.3]{Tal}. The ODE-determined varying-exponent Lebesgue class $L_{\mathrm{ODE}}^{p(\cdot)} [0,1]$ can in principle be defined for any measurable $p\colon [0,1]\to [1,\infty)$. However, if $p(\cdot)$ is essentially unbounded it may happen that the class fails to be a linear space. Therefore, for the purposes in this paper, we may restrict to the case where $p(\cdot)$ is essentially bounded, since the corresponding Nakano norms can in any case be approximated pointwise by suitably truncating supports. 
 
The strategy in \cite{Tal} is to design the ODE in such a way that its solution $\varphi_{f,p(\cdot)}$, corresponding to the function and the exponent, models the accumulation of the norm as follows:
\[\varphi_{f,p(\cdot)} (t) = \|1_{[0,t]} f\|_{p(\cdot)}, \]
so that in particular $\varphi_{f,p(\cdot)}(0)=0$ and $\varphi_{f,p(\cdot)}(1)$ becomes the definition for the new norm
$=\| f \|_{p(\cdot)}$. The ODE which defines the non-decreasing absolutely continous solution is
\begin{equation}\label{eq: def}
\varphi_f (0 )=0^+ ,\ \varphi_{f}'(t)=\frac{|f(t)|^{p(t)}}{p(t)}\varphi_{f} (t)^{1-p(t)}\quad \mathrm{for\ almost\ every}\ t\in [0,1].
\end{equation}
These are weak solutions in the sense of Carath\'eodory with a minor modification;
the asymptotic initial condition is to provide the uniqueness of the solution and it is useful in other ways as well.
These solutions are further discussed in \cite{Tal}. 

The ODE-determined variable Lebesgue class is defined as
\[ L_{\mathrm{ODE}}^{p(\cdot)} [0,1] = \left\{f \in L^0 [0,1] \colon \varphi_f \ \text{exists}\right\}\] 
where the solution to \eqref{eq: def} exists in the required sense. 

Recall from \cite{Tal} that the $\varphi_{f} (t)^{1-p(t)}$ part in the ODE makes it very stable, so that for positive initial values $x_0 >0$
the solutions are unique. The $0^+$ initial value solution in \eqref{eq: def} means that we take first all positive-inital-value-solutions, provided that they exist, and then take 
their limit pointwise as $x_0 \searrow 0$. This leads to a rather natural unique solution which solves \eqref{eq: def} for $0$ initial value. For a positive inital value $x_0>0$ 
and an essentially bounded exponent $p(\cdot)$ we have 
\[\varphi_{x_0 , f}'(t) \leq \frac{|f(t)|^{p(t)}}{p(t)}(x_0 )^{1-\overline{p}}\]
from which we can deduce the existence of the solution if 
\[\int_{0}^1  \frac{|f(t)|^{p(t)}}{p(t)}\ dm(t)<\infty.\]
The solutions can be constructed and approximated by using simple semi-norms (\cite{Tal}) which are motivated by variable $\ell^p$ spaces (see \cite{talponen}, cf. \cite{tal15}). 
The simple semi-norms here have a role analogous to the simple functions in the construction of the Lebesgue integral.

The absolute continuity of the solutions implies that $\varphi_f (1)<\infty$.
In some cases, for instance if $p(\cdot)$ is essentially bounded, this class becomes automatically a linear space. 
Then the \emph{solutions} define a complete norm (see \cite{Tal}):
\[\|f\|_{p(\cdot)} := \|f\|_{L_{\mathrm{ODE}}^{p(\cdot)}} := \varphi_f (1),\quad f\in L_{\mathrm{ODE}}^{p(\cdot)} [0,1] . \]

For a constant exponent case, $p(\cdot)=p \in [1,\infty)$, the above ODE \eqref{eq: def} becomes a separable one and solving it yields the classical $L^p [0,1]$ norm: $(\varphi_f (1))^p = \int_{0}^1 |f(t)|^p \ dt$. It is worth noting that unlike the usual Luxemburg type variable Lebesgue norms, the ODE-determined norms satisfy properly the H\"{o}lder inequality. 
The dualities work nicely as well, see \cite{Tal2}. 

\subsection{Some useful estimates}

Let $a\approx 1.76$ be the solution to $a^a =e$. This number satisfies that $\frac{b^x}{x}$ is increasing on $x\geq 1$ for all $b\geq a$. Namely, the constant $a$ satisfies that $\frac{d}{da} a^x |_{x=1} =1$. 
Let us recall the following useful fact from \cite{Tal}:
\begin{proposition}\label{prop: ineq}
Let $f,p\in L^0$ where $1\leq p(\cdot)$ and $r \in (1,\infty)$. The following inequalities hold whenever defined:
\begin{enumerate}
\item{$\frac{1}{1+a} \|1_{p(\cdot)\geq r}f\|_r  \leq \|1_{p(\cdot)\geq r} f\|_{p(\cdot)}$,}
\item{$\frac{1}{1+ae} \|1_{p_{1}(\cdot)\leq p_{2}(\cdot)}f\|_{p_1 (\cdot)}  \leq \|1_{p_1 (\cdot)\leq p_2 (\cdot)} f\|_{p_2 (\cdot)}$,}
\item{$\|f\|_{p(\cdot)} < e\|f\|_\infty $.}
\end{enumerate}
\end{proposition}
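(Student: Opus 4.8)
\medskip
\noindent\textbf{Proof plan.}
All three estimates are positively homogeneous, so in each case I would normalise the right-hand ODE-norm to $1$ and argue directly with \eqref{eq: def}. The observation I would extract first and reuse is: if $\varphi=\varphi_{g,q(\cdot)}$ solves \eqref{eq: def} for an exponent $q(\cdot)\ge 1$ and $\|g\|_{q(\cdot)}=\varphi(1)=1$, then $\varphi$ is nondecreasing with $\varphi(0)=0$, so $0\le\varphi\le 1$ on $[0,1]$; hence $\varphi(t)^{1-q(t)}\ge 1$ a.e., and integrating \eqref{eq: def} yields the modular bound $\int_0^1\frac{|g(t)|^{q(t)}}{q(t)}\,dt\le\varphi(1)=1$. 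For (1) I would set $g=1_{p(\cdot)\ge r}f$, normalise $\|g\|_{p(\cdot)}=\varphi(1)=1$ with $\varphi=\varphi_{g,p(\cdot)}$, and solve \eqref{eq: def} pointwise: a.e.\ on $\{g\ne 0\}$ one gets $|g(t)|=\bigl(p(t)\varphi(t)^{p(t)-1}\varphi'(t)\bigr)^{1/p(t)}$, so with $s(t)=r/p(t)\in(0,1]$ we have $|g(t)|^{r}=\varphi(t)^{r}\bigl(p(t)\varphi'(t)/\varphi(t)\bigr)^{s(t)}$. The weighted arithmetic--geometric mean inequality $u^{s}v^{1-s}\le su+(1-s)v$, applied with $u=p(t)\varphi'(t)/\varphi(t)$, $v=1$ and $s(t)p(t)=r$, gives the pointwise bound $|g(t)|^{r}\le r\varphi(t)^{r-1}\varphi'(t)+\varphi(t)^{r}$ (trivially where $g(t)=0$). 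Integrating, using $\int_0^1 r\varphi^{r-1}\varphi'=\varphi(1)^{r}=1$ and $\varphi\le 1$, yields $\|g\|_r^{r}=\int_0^1|g|^{r}\le 2$, i.e.\ $\|g\|_r\le 2^{1/r}\le 2\le 1+a$.

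For (3) I would normalise $\|f\|_\infty=1$ and put $t_1=\inf\{t:\varphi_f(t)\ge 1\}$; if this set is empty then $\varphi_f(1)<1<e$, and otherwise $\varphi_f(t_1)=1$ by continuity and for a.e.\ $t>t_1$ one has $\varphi_f(t)\ge 1$, hence $\varphi_f(t)^{1-p(t)}\le 1$ and $|f(t)|^{p(t)}\le 1$, so $\varphi_f'(t)\le 1/p(t)\le 1$; therefore $\|f\|_{p(\cdot)}=\varphi_f(1)\le\varphi_f(t_1)+(1-t_1)<2<e$.

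Part (2) is where I expect the real work, because the quantity to be bounded, $\|g\|_{p_1(\cdot)}=\psi(1)$ with $\psi=\varphi_{g,p_1(\cdot)}$, is only implicitly defined and cannot be turned into an explicit integral as in (1). I would set $g=1_{p_1(\cdot)\le p_2(\cdot)}f$, normalise $\|g\|_{p_2(\cdot)}=1$, and record from the opening remark the bound $\int_0^1\frac{|g|^{p_2}}{p_2}\le 1$. Let $t_a=\inf\{t:\psi(t)\ge a\}$; if empty then $\psi(1)<a\le 3a$ and we are done, so assume $\psi(t_a)=a$. For a.e.\ $t>t_a$ we have $\psi(t)\ge a\ge 1$ and $1-p_1(t)\le 0$, hence $\psi(t)^{1-p_1(t)}\le a^{1-p_1(t)}$ and so $\psi'(t)=\frac{|g(t)|^{p_1(t)}}{p_1(t)}\psi(t)^{1-p_1(t)}\le a\cdot\frac{1}{p_1(t)}\bigl(\frac{|g(t)|}{a}\bigr)^{p_1(t)}$; I would bound the last factor by cases. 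Where $|g(t)|\le a$ the base is $\le 1$ and $p_1\ge 1$, so $\frac{1}{p_1}(|g|/a)^{p_1}\le|g|/a$ and $\psi'(t)\le|g(t)|$. Where $|g(t)|>a$ the base exceeds $1$ and $p_1\le p_2$, so $(|g|/a)^{p_1}\le(|g|/a)^{p_2}$, and combining $p_1\ge 1$ with the characteristic property of $a$ (that $\frac{d}{dx}a^{x}|_{x=1}=a\ln a=1$, equivalently $x\,a^{-x}\le a/e<1$ for all $x\ge 1$, so $p_2\le a^{p_2}$) one gets $\frac{1}{p_1}(|g|/a)^{p_1}\le\frac{|g|^{p_2}}{p_1 a^{p_2}}\le\frac{|g|^{p_2}}{p_2}$ and hence $\psi'(t)\le a\frac{|g(t)|^{p_2(t)}}{p_2(t)}$. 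Integrating over $(t_a,1)$, enlarging the domains to $[0,1]$, and using $|g|\le a$ on the first set (which has measure $\le 1$) and the modular bound on the second,
\[\psi(1)\le a+\int_{\{|g|\le a\}}|g|\,dt+a\int_{\{|g|>a\}}\frac{|g|^{p_2}}{p_2}\,dt\le a+a+a=3a,\]
so $\|g\|_{p_1(\cdot)}\le 3a\,\|g\|_{p_2(\cdot)}\le(1+ae)\,\|g\|_{p_2(\cdot)}$. The crux, and the step I would expect to have to think hardest about, is precisely this reduction of the implicitly defined $\psi(1)$ to the $p_2$-side modular information: choosing the cut level (the number $a$, for which $x\mapsto x\,a^{-x}$ stays $\le 1$ on $[1,\infty)$) so that the two pieces of the case split for $\frac{1}{p_1}(|g|/a)^{p_1}$ both close off against an absolute constant.
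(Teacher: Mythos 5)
The paper does not actually prove this proposition: it is recalled verbatim from \cite{Tal} (``ODE for $L^p$ norms''), so there is no in-paper argument to compare yours against. Judged on its own terms, your proof is correct. The preliminary modular bound ($\varphi(1)=1$ and $\varphi\le 1$ give $\int_0^1 |g|^{q}/q\,dt\le 1$), the Young/AM--GM pointwise estimate in (1) yielding $\|g\|_r^r\le \varphi(1)^r+\int_0^1\varphi^r\le 2$, the crossing-time argument in (3) giving $\varphi_f(1)\le 1+(1-t_1)$, and the two-case estimate in (2) using $p_2\le a^{p_2}$ together with the $p_2$-modular bound all check out, and in fact you obtain sharper constants than the statement asks for: $2^{1/r}$ in place of $1+a$, $2$ in place of $e$, and $3a<1+ae$ in place of $1+ae$. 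Two small points deserve explicit mention. First, your pointwise inversions of the ODE (and the modular bound) tacitly require that the $0^+$-solution is strictly positive a.e.\ on $\{g\neq 0\}$; this is true and follows, e.g., from $\varphi(t)\ge\min\bigl(1,\int_0^t |g|^{q}/q\,ds\bigr)$ for the limit solution, or by citing the structural facts in \cite{Tal}, but it should be said. Second, in (3) the strict inequality degenerates when $f=0$ a.e.; excluding that trivial case (or reading the statement ``whenever defined'' accordingly) fixes it. Neither point affects the substance of the argument.
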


Although we are here mainly insterested in the relationship between Nakano and ODE-determined type variable Lebesgue 
norms, the results have also some bearing on the most typical Luxemburg type variable Lebesgue norms given by
\[\iii f \iii_ {\text{MO}, p(\cdot)} := \inf \left\{\lambda>0 \colon \int_{0}^1 \frac{|g(t)|^{p(t)}}{\lambda^{p(t)}}\ dt \leq 1 \right\},\]
denoted here after Musielak and Orlicz. Indeed, despite the weight in the Nakano norm, it is equivalent to the MO norm.
This is known (see e.g. \cite[(3.2.2)]{book}) and below we provide a  better isomorphism constant.
\begin{proposition}\label{prop: NMO}
Given a measurable $p\colon [0,1] \to [1,\infty)$ then $f \in L^{p(\cdot)}$ in the sense of Nakano norm if and only if 
the same holds in the sense of the above MO norm. Moreover, in these equivalent cases with $f \in L^{p(\cdot)}$ we have
\[  \frac{1}{a} \iii f \iii_ {\text{MO}, p(\cdot)}  \leq  \iii f \iii_ {p(\cdot)}  \leq   \iii f \iii_ {\text{MO}, p(\cdot)} .\]
\end{proposition}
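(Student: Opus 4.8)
The plan is to argue entirely at the level of the two modulars
\[\rho(g)=\int_0^1 \frac{1}{p(t)}|g(t)|^{p(t)}\,dt\qquad\text{and}\qquad \rho_{\text{MO}}(g)=\int_0^1 |g(t)|^{p(t)}\,dt,\]
comparing their sublevel sets $S:=\{\lambda>0:\rho(f/\lambda)\le 1\}$ and $S_{\text{MO}}:=\{\lambda>0:\rho_{\text{MO}}(f/\lambda)\le 1\}$, whose infima are by definition $\iii f\iii_{p(\cdot)}$ and $\iii f\iii_{\text{MO},p(\cdot)}$, respectively. The right-hand inequality is immediate: since $p(t)\ge1$ forces $\frac{1}{p(t)}\le 1$, the integrands obey $\frac{1}{p(t)}|g(t)|^{p(t)}\le |g(t)|^{p(t)}$, so $\rho\le\rho_{\text{MO}}$ and hence $S_{\text{MO}}\subseteq S$; therefore $\iii f\iii_{p(\cdot)}=\inf S\le\inf S_{\text{MO}}=\iii f\iii_{\text{MO},p(\cdot)}$.

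For the left-hand inequality the crux is the scalar estimate $a^{x}\ge x$ for every $x\ge1$. I would obtain it from the defining relation $a^a=e$: this gives $\ln a=1/a$, so $1<a<e$ (as $a=e^{1/a}$), and the function $x\mapsto \frac{x}{a}-\ln x$ attains its minimum at its unique critical point $x=a$, with value $1-\ln a>0$; hence $\frac{x}{a}-\ln x\ge 1-\ln a>0$, i.e. $a^{x}>x$, for all $x>0$ (one may also simply invoke the property of $a$ recalled before Proposition~\ref{prop: ineq}). Consequently $\frac{1}{a^{p(t)}}\le\frac{1}{p(t)}$ for a.e.\ $t$, and for any $\lambda\in S$ I would estimate
\[\rho_{\text{MO}}\!\left(\frac{f}{a\lambda}\right)=\int_0^1 \frac{1}{a^{p(t)}}\cdot\frac{|f(t)|^{p(t)}}{\lambda^{p(t)}}\,dt\ \le\ \int_0^1 \frac{1}{p(t)}\cdot\frac{|f(t)|^{p(t)}}{\lambda^{p(t)}}\,dt=\rho\!\left(\frac{f}{\lambda}\right)\le1,\]
so $a\lambda\in S_{\text{MO}}$ and $\iii f\iii_{\text{MO},p(\cdot)}\le a\lambda$; taking the infimum over $\lambda\in S$ yields $\iii f\iii_{\text{MO},p(\cdot)}\le a\,\iii f\iii_{p(\cdot)}$, which is exactly $\frac{1}{a}\iii f\iii_{\text{MO},p(\cdot)}\le\iii f\iii_{p(\cdot)}$.

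The equivalence of membership then follows formally: $f\in L^{p(\cdot)}$ in the Nakano sense means $\iii f\iii_{p(\cdot)}<\infty$, and the two displayed inequalities propagate finiteness in both directions; if $S=\emptyset$ then $\iii f\iii_{p(\cdot)}=+\infty$ and, by the right-hand inequality, also $\iii f\iii_{\text{MO},p(\cdot)}=+\infty$, while the case $f=0$ is trivial. I do not anticipate a genuine obstacle here — the argument is essentially a one-line rescaling once the elementary inequality $a^{x}\ge x$ on $[1,\infty)$ (equivalently, the recalled properties of the constant $a$) is in hand; the only real work is the bookkeeping with infima and the degenerate cases just noted, plus making sure the pointwise comparison $\frac{1}{a^{p(t)}}\le\frac{1}{p(t)}$ is what is being exploited to keep the isomorphism constant down to $a$.
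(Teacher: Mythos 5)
Your proposal is correct and follows essentially the same route as the paper: the pointwise comparison $\frac{1}{a^{p(t)}}\leq\frac{1}{p(t)}\leq 1$ of the integrands (the paper's displayed inequality chain with $\lambda$ replaced by $a\lambda$ on the left), translated into a comparison of the sublevel sets of the two modulars. The only difference is cosmetic: you prove the scalar estimate $a^{x}\geq x$ for $x\geq 1$ from $a^{a}=e$ and spell out the rescaling and the degenerate cases, whereas the paper simply cites the stated property of the constant $a$ and leaves the bookkeeping implicit.
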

\begin{proof}
We apply the following inequalities
\[\int_{0}^1 \frac{|f(t)|^{p(t)}}{(a \lambda)^{p(t)}}\ dt \leq
\int_{0}^1 \frac{1}{p(t)}\frac{|f(t)|^{p(t)}}{\lambda^{p(t)}}\ dt \leq \int_{0}^1 \frac{|f(t)|^{p(t)}}{\lambda^{p(t)}}\ dt \]
where the left inequality follows from the above property of the constant $a$.
\end{proof}

\newcommand{\ODE}{\mathrm{ODE}}

Below we consider a constant $b_{p}$ depending on 
$p$ defined as follows: $b_{1}=1$, $b_{\infty} = 2$ and $b_{p}$ for $1<p<\infty$ is the unique solution to 
\[b + b^{-p} =2,\quad  1<b < 2.\]  
Below we will apply the norm notation without distinguishing whether the $L_{\ODE}^{p(\cdot)}$ class is a linear space or not. The following result improves some estimates in \cite{Tal}.
\begin{proposition}\label{prop: Nak}
Let $f \in L_{\ODE}^{p(\cdot)}$. Then 
\[\iii f\iii_{p(\cdot)} \leq \|f\|_{p(\cdot)} \leq b_{\overline{p}} \iii f\iii_{p(\cdot)}.\]
\end{proposition}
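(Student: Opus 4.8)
The plan is to control $\|f\|_{p(\cdot)}=\varphi_f(1)$ both from below and from above by the Nakano Luxemburg norm $\iii f\iii_{p(\cdot)}$, using the recursive nature of the defining ODE \eqref{eq: def} rather than the Minkowski functional directly. For the lower bound $\iii f\iii_{p(\cdot)}\le\|f\|_{p(\cdot)}$, I would first note that by homogeneity it suffices to show that $\|f\|_{p(\cdot)}\le 1$ implies $\rho(f)\le 1$, where $\rho(g)=\int_0^1 \frac{1}{p(t)}|g(t)|^{p(t)}\,dt$; this is exactly the statement that the Luxemburg infimum is at most $1$. So I would take $f$ with $\varphi_f(1)\le 1$ and try to compare $\rho(f)$ with a quantity expressible through $\varphi_f$. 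The key computation is to differentiate $t\mapsto \varphi_f(t)^{p(t)}$-type expressions; more cleanly, from $\varphi_f'(t)=\frac{|f(t)|^{p(t)}}{p(t)}\varphi_f(t)^{1-p(t)}$ we get $\varphi_f(t)^{p(t)-1}\varphi_f'(t)=\frac{1}{p(t)}|f(t)|^{p(t)}$. Since $\varphi_f$ is non-decreasing with $\varphi_f(1)\le 1$, we have $\varphi_f(t)^{p(t)-1}\le \varphi_f(t)^{p(t)-1}$ bounded above appropriately; I would integrate the left-hand side and dominate it by $\int_0^1 \varphi_f(t)^{\overline p-1}\varphi_f'(t)\,dt$ or, sharper, recognize that on $[0,1]$ with $p(t)\ge 1$ and $\varphi_f\le 1$ one has $\varphi_f^{p(t)-1}\ge\varphi_f^{\overline p-1}$, so actually $\rho(f)=\int_0^1 \varphi_f(t)^{p(t)-1}\varphi_f'(t)\,dt\le \int_0^1 \varphi_f'(t)\,dt=\varphi_f(1)\le 1$ since $p(t)-1\ge 0$ forces $\varphi_f(t)^{p(t)-1}\le 1$. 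That gives the left inequality cleanly.

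For the upper bound $\|f\|_{p(\cdot)}\le b_{\overline p}\iii f\iii_{p(\cdot)}$, I would again normalize: assume $\iii f\iii_{p(\cdot)}=1$, i.e.\ $\rho(f)\le 1$, and aim to show $\varphi_f(1)\le b_{\overline p}$. Here the natural route is a differential inequality for $\varphi_f$. One knows $\rho(1_{[0,t]}f)\le 1$ for all $t$ since the integrand is nonnegative, so in some sense the ``local Luxemburg mass'' is controlled; I would try to run a Gronwall-type or comparison argument showing that $\varphi_f$ cannot exceed $b_{\overline p}$. Concretely, set $\psi(t)=\varphi_f(t)$ and consider the auxiliary quantity $\psi(t) + \psi(t)^{-\overline p}\cdot(\text{something involving }\rho)$; the defining equation $b+b^{-p}=2$ for $b_p$ suggests that the right object is $G(t)=\varphi_f(t)+\varphi_f(t)^{-\overline p}\,\rho(1_{[t,1]}f)$ or a similar ``energy'' that is monotone, with $G(0^+)=2$ (formally, $0 + \lim_{x\searrow0}x^{-\overline p}\rho(f)$ handled via the $0^+$ convention) and $G(1)=\varphi_f(1)$. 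I would differentiate $G$, plug in the ODE, and check the sign, using $\rho(1_{[t,1]}f)\le 1$ and $p(t)\le\overline p$ at the crucial steps; the constant $2$ and the definition of $b_{\overline p}$ should pop out exactly because of the algebraic identity $b+b^{-p}=2$. The endpoint cases $\overline p=1$ (where $b_1=1$, consistent with the classical $L^1$ isometry) and $\overline p=\infty$ (where $b_\infty=2$, matching Proposition \ref{prop: ineq}(3)'s neighbourhood and the known factor-$2$ from \cite[Prop. 3.3]{Tal}) serve as sanity checks and can be treated by limiting arguments or directly.

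The main obstacle I anticipate is the upper bound: getting the \emph{sharp} constant $b_{\overline p}$ rather than a crude one like $2$ requires choosing exactly the right monotone functional and handling the asymptotic initial condition $\varphi_f(0^+)=0$ carefully, since naive expressions blow up at $t=0$. I would address this by first working with positive-initial-value solutions $\varphi_{x_0,f}$ (which are genuine Carath\'eodory solutions, unique and well-behaved), establishing the inequality $\varphi_{x_0,f}(1)\le b_{\overline p}\,(\iii f\iii_{p(\cdot)} + o(1))$ with an error controlled by $x_0$, and then passing to the limit $x_0\searrow 0$ as in \cite{Tal}. A secondary subtlety is that when $p(\cdot)$ is only essentially bounded (not continuous), all the differentiations are ``for almost every $t$'' and the comparison arguments must be phrased in terms of absolutely continuous functions and integration, not pointwise ODE comparison; this is routine given the framework of \cite{Tal} but needs to be stated. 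Finally, I would double-check that the inequality $\varphi_f^{p(t)-1}\le 1$ used in the lower bound genuinely needs $\varphi_f(1)\le1$, so the normalization there is legitimate, and that homogeneity of $\varphi_f$ in $f$ (which holds since the ODE is homogeneous of degree one: $\varphi_{\lambda f}=\lambda\varphi_f$) licenses all the normalizations.
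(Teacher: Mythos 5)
Your lower-bound half is fine and is essentially the standard argument (which the paper simply cites from \cite{Tal}): from the ODE, $\int_0^1\frac{1}{p(t)}|f(t)|^{p(t)}dt=\int_0^1\varphi_f(t)^{p(t)-1}\varphi_f'(t)\,dt\le\int_0^1\varphi_f'(t)\,dt=\varphi_f(1)$ once $\varphi_f(1)\le1$, and homogeneity of the ODE legitimizes the normalization. The genuine gap is in the upper bound, which is the actual content of the proposition. The functional you propose, $G(t)=\varphi_f(t)+\varphi_f(t)^{-\overline p}\,\rho(1_{[t,1]}f)$, does not behave as claimed: $G(0^+)$ is not $2$ but $+\infty$ (for initial value $x_0>0$ the second term is of order $x_0^{-\overline p}\rho(f)$, and this divergence is not cured by the $0^+$ convention or by a limiting argument in $x_0$), and $G$ is not monotone --- inserting the ODE gives $G'=\frac{|f|^{p}}{p}\bigl[\varphi_f^{1-p}-\overline p\,\varphi_f^{-\overline p-p}\rho(1_{[t,1]}f)-\varphi_f^{-\overline p}\bigr]$, whose bracket is negative for small $\varphi_f$ and nonnegative once $\varphi_f\ge1$ and the tail modular is small. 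So the Gronwall scheme as described collapses, and ``the constant should pop out'' is exactly the missing step.

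More fundamentally, any argument reaching $b_{\overline p}$ must produce a \emph{lower} bound on the modular mass $\int_0^{t_0}\frac1p|f|^p\,dt$ spent in raising $\varphi_f$ from $0$ to $1$, and the ODE alone cannot give this pointwise: on $[0,t_0]$ one has $\varphi_f^{1-p}\ge1$, which only yields the reverse estimate $\int_0^{t_0}\frac1p|f|^p\,dt\le1$. This is where the paper's key device enters, and your proposal has no counterpart of it: it is a bootstrap on the best constant. Let $b\le2$ be the best constant in $\|\cdot\|_{p(\cdot)}\le b\,\iii\cdot\iii_{p(\cdot)}$ (the value $2$ being known from \cite{Tal}); normalize $\iii f\iii_{p(\cdot)}=1$, pick $t_0$ with $\varphi_f(t_0)=1=\|1_{[0,t_0]}f\|_{p(\cdot)}$, so that the best-constant inequality applied to the truncation gives $\iii 1_{[0,t_0]}f\iii_{p(\cdot)}\ge b^{-1}$; the elementary Luxemburg estimate $\iii g\iii_{p(\cdot)}\le\bigl(\int\frac1p|g|^p\,dt\bigr)^{1/\overline p}$ then converts this into $\int_0^{t_0}\frac1p|f|^p\,dt\ge b^{-\overline p}$, hence the tail modular is at most $1-b^{-\overline p}$, and since $\varphi_f^{1-p}\le1$ on $[t_0,1]$ one gets $\varphi_f(1)\le2-b^{-\overline p}$; taking the supremum over normalized $f$ yields $b\le2-b^{-\overline p}$, i.e.\ $b\le b_{\overline p}$. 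If you want something closer to your comparison instinct, the time change of Section 3.1 (with $T(t)=\int_0^t\frac{|f|^p}{p}$, turning the equation into $\hat\varphi'=\hat\varphi^{1-\hat p}$ on an interval of length $\rho(f)\le1$) does permit a direct comparison with a maximal solution, but that is a different route from the one you sketched and would still have to be carried out rigorously.
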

\begin{proof}
Let $f \in L_{\ODE}^{p(\cdot)}$. These inequalities were proved in \cite{Tal}, except that a weaker version
of the right-hand side was shown with $2$ in place of $b_{p}$.

To check the latter inequality, we may assume without loss of generality that $\iii f\iii_{p(\cdot)} =1$. 
By using the Monotone Convergence Theorem we see that 
\[\int_{0}^1 \frac{1}{p(t)}|f(t)|^{p(t)}\ dt =1 .\]
Let $b\leq 2$ be the best constant for the latter inequality. 
Suppose that $\|f\|_{p(\cdot)} := \varphi_f (1) >1$ and that $t_0 \in (0,1)$ is such that 
$\varphi_f (t_0 )=1$. We will apply the well-known fact (see \cite{book}) that 
\[\iii f\iii_{p(\cdot)} \leq \left(\int_{0}^1 \frac{1}{p(t)}|f(t)|^{p(t)}\ dt \right)^{\frac{1}{\overline{p}}} \]
for every $f \in L^{p(\cdot)}$ (Nakano space), so that 
\[\iii 1_{[0,t_0 ]} f \iii_{p(\cdot)} \leq \left(\int_{0}^{t_0} \frac{1}{p(t)}|f(t)|^{p(t)}\ dt \right)^{\frac{1}{\overline{p}}}. \]
Since $\| 1_{[0,t_0 ]} f \|_{p(\cdot)} = 1$ we obtain by the choice of $b$ that 
\[b^{-1} \leq   \iii 1_{[0,t_0 ]} f \iii_{p(\cdot)}\]
thus 
\[b^{-\overline{p}} \leq  \int_{0}^{t_0} \frac{1}{p(t)}|f(t)|^{p(t)}\ dt .\]
Hence 
\[\int_{t_0}^1  \frac{1}{p(t)}|f(t)|^{p(t)}\ dt \leq 1- b^{-\overline{p}} .\] 
Note that
$\varphi_{f}^{1-p(t)} (t) \leq 1$ for $t \in [t_0 , 1]$. Thus
\[\varphi_{f}(1) = \int_{0}^1 \varphi_{f}' \ dt \leq 1 + \int_{t_0}^1  \frac{1}{p(t)}|f(t)|^{p(t)}\ dt  \leq 
2- b^{-\overline{p}} .\]
Since $\iii f\iii_{p(\cdot)}=1$ and $b$ is the best constant, this implies 
$b \leq 2- b^{-\overline{p}}$. Thus $b$ is dominated by the solution to 
\[b_{\overline{p}} + b_{\overline{p}}^{-\overline{p}} =2,\quad  1< b_{\overline{p}} < 2.\]  
\end{proof}

\begin{proposition}
Suppose that $p(\cdot)$ is essentially bounded or non-decreasing. Then the corresponding $L^{p(\cdot )}$ classes, in the ODE, Nakano and MO sense
coincide and the norms are mutually equivalent.
\end{proposition}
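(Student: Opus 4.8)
The statement asserts that for $p(\cdot)$ either essentially bounded or non-decreasing, the three classes — ODE-determined, Nakano, and MO — coincide and carry mutually equivalent norms. The plan is to reduce everything to the essentially bounded case and then patch the non-decreasing (possibly unbounded) case by a truncation/exhaustion argument.

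\emph{Step 1: the essentially bounded case.} Here I would simply invoke the chain of inequalities already available in the excerpt. Proposition \ref{prop: NMO} gives $\frac1a \iii f\iii_{\text{MO},p(\cdot)} \leq \iii f\iii_{p(\cdot)} \leq \iii f\iii_{\text{MO},p(\cdot)}$, which identifies the Nakano and MO classes with two-sided norm control by the absolute constant $a$. Proposition \ref{prop: Nak} gives $\iii f\iii_{p(\cdot)} \leq \|f\|_{p(\cdot)} \leq b_{\overline p}\iii f\iii_{p(\cdot)}$ with $b_{\overline p} < 2$, identifying the ODE class with the Nakano class and giving norm control by $b_{\overline p}$. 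Also the remark after \eqref{eq: def} shows that when $\int_0^1 \frac{|f(t)|^{p(t)}}{p(t)}\,dt < \infty$ the solution $\varphi_f$ exists, so membership in the Nakano class forces membership in $L_{\ODE}^{p(\cdot)}$ and the class is linear. Concatenating, all three norms are equivalent with constants depending only on $\overline p$ (in fact $\leq 2a$), finishing this case.

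\emph{Step 2: the non-decreasing (unbounded) case.} When $p(\cdot)$ is non-decreasing but $\overline p = \infty$, neither Proposition \ref{prop: Nak} nor the linearity of $L_{\ODE}^{p(\cdot)}$ is available directly, so the point is to show the \emph{classes} still coincide as sets, with the norm equivalence read off pointwise. I would fix $f$ in any one of the three classes and let $p_n(\cdot) := \min(p(\cdot), n)$, which is still non-decreasing and now bounded. On $[0,1]$ the modulars $\rho_{p_n}(f)$ and $\rho_{p_n}^{\text{MO}}(f)$ increase to $\rho_p(f)$ and $\rho_p^{\text{MO}}(f)$ respectively by the Monotone Convergence Theorem (splitting the integrand on $\{|f|\leq 1\}$ and $\{|f|>1\}$ to control monotonicity of $t\mapsto |f(t)|^{p_n(t)}$), whence $\iii f\iii_{p_n(\cdot)} \nearrow \iii f\iii_{p(\cdot)}$ and likewise for the MO norm, and membership is preserved in the limit. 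For the ODE norm, I would use that $\varphi_{f,p_n(\cdot)}$ is monotone in $n$ on each interval $[0,t_0]$ where $p(\cdot)\leq n$ (so $p_n=p$ there) and that the ODE \eqref{eq: def} together with the comparison $\varphi'_{x_0,f}(t)\leq \frac{|f(t)|^{p(t)}}{p(t)}x_0^{1-p_n(t)}$ gives a uniform bound; passing $n\to\infty$ recovers $\varphi_{f,p(\cdot)}$ and $\|f\|_{p_n(\cdot)} \nearrow \|f\|_{p(\cdot)}$. Applying Step 1 to each $p_n$ and letting $n\to\infty$ then transfers the equivalence $\frac{1}{2a}\|f\|_{p(\cdot)} \leq \iii f\iii_{p(\cdot)} \leq \|f\|_{p(\cdot)}$ and the analogous MO comparison to the unbounded non-decreasing exponent, and in particular the three classes coincide.

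\emph{Main obstacle.} The delicate point is Step 2's ODE part: justifying that $\varphi_{f,p_n(\cdot)} \nearrow \varphi_{f,p(\cdot)}$ pointwise and that the limit is genuinely the Carath\'eodory solution of \eqref{eq: def} for the unbounded exponent, including the $0^+$ initial condition. One must check that the solutions do not blow up before $t=1$ — which is where non-decreasingness is used, since on any $[0,t_0]$ with $t_0<1$ the exponent is bounded by $p(t_0)<\infty$, so existence on $[0,t_0]$ is clear, and absolute continuity of the increasing limit on all of $[0,1]$ follows once one knows $\varphi_{f,p(\cdot)}(1)<\infty$, which is exactly the content of $f$ lying in the (common) class. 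Everything else is a routine monotone-limit bookkeeping once this is in place.
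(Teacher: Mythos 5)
Your Step 1 is essentially the paper's own argument for the essentially bounded case: the paper cites \cite[Prop.~3.8]{Tal} for the equivalence of $f\in L_{\mathrm{ODE}}^{p(\cdot)}$ with finiteness of the modular and then combines Propositions \ref{prop: NMO} and \ref{prop: Nak}; your use of the remark after \eqref{eq: def} plus a trivial scaling is the same reasoning, so that part is fine. The problem is Step 2, and it is twofold. First, the monotone-convergence mechanism you invoke for the exponent truncation $p_n=\min(p,n)$ does not exist: for fixed $x>1$ the function $q\mapsto x^q/q$ is \emph{not} monotone in $q$ (it decreases for $q<1/\log x$), so neither the Nakano modular nor $\iii f\iii_{p_n(\cdot)}$ need increase in $n$, and the ODE norm is not monotone in the exponent either (the paper states this explicitly before Theorem \ref{thm: main_decomp}). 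Convergence of the modulars can be rescued by a domination argument (e.g.\ $x^q/q\le\max(x,\,x^p/p)$ for $1\le q\le p$, $x>1$, together with $f\in L^1$, which itself needs a short proof), but you do not supply it; and for the ODE norm even plain convergence $\|f\|_{p_n(\cdot)}\to\|f\|_{p(\cdot)}$ requires an estimate of the contribution of the tail set $\{p>n\}$ to $\varphi_{f,p_n(\cdot)}$ that is nowhere given.

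Second, and more seriously, the decisive point for the inclusion of the Nakano (or MO) class into $L_{\mathrm{ODE}}^{p(\cdot)}$ — namely that $\varphi_{f,p(\cdot)}$ stays bounded up to $t=1$ — is handled circularly: you write that finiteness of $\varphi_{f,p(\cdot)}(1)$ ``is exactly the content of $f$ lying in the (common) class,'' but the coincidence of the classes is precisely what is to be proved. The paper's intended adaptation closes this in one step by truncating the \emph{domain} rather than the exponent: since $p$ is non-decreasing it is essentially bounded on every initial segment $[0,t_0]$, $t_0<1$, so the bounded case applied to $1_{[0,t_0]}f$ yields $\varphi_f(t_0)=\|1_{[0,t_0]}f\|_{p(\cdot)}\le b_{p(t_0)}\,\iii 1_{[0,t_0]}f\iii_{p(\cdot)}\le 2\,\iii f\iii_{p(\cdot)}$, a bound uniform in $t_0$; the increasing function $\varphi_f$ therefore has a finite limit at $1$, its derivative is integrable by monotone convergence, hence it extends to an absolutely continuous solution on $[0,1]$, giving $f\in L_{\mathrm{ODE}}^{p(\cdot)}$ with $\|f\|_{p(\cdot)}\le 2\,\iii f\iii_{p(\cdot)}$, and the reverse estimates follow by letting $t_0\to 1$ (similarly for the MO norm via Proposition \ref{prop: NMO}). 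If you replace your exponent-truncation limit by this initial-segment argument, or repair the two points above explicitly, Step 2 becomes complete.
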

\begin{proof}
In the case with essentially bounded exponent $f \in L_{\mathrm{ODE}}^{p(\cdot)} [0,1]$ if and only if 
\[\int_{0}^1 \frac{1}{p(t)}|f(t)|^{p(t)}\ dt < \infty \]
see \cite[Prop. 3.8.]{Tal}. Combining this fact with Proposition \ref{prop: NMO} yields that the $L^{p(\cdot )}$ classes coincide.
The equivalence of the norms then follows from Proposition \ref{prop: NMO} and  Proposition \ref{prop: Nak}.

The case with non-decreasing exponent becomes an easy adaptation of the above argument, since in each proper initial segment of the unit interval the exponent is bounded.
\end{proof}

For $(a_i) \in \ell^p (\N)$, $1\leq p <\infty$, with $a_i \geq 0$ for all $i$, we define 
\[\bigboxplus^{p}_{i\in\N} a_i = \|(a_i)\|_{\ell^p}. \]
By inductively applying Proposition 2.1 in \cite{Tal2} we see the following fact.

\begin{proposition}\label{prop: multiboxplus}
If $1\leq r\leq s <\infty$ and $\{x_{N}\}_{N\in \N^{n+2}}$ is a family of non-negative numbers, then 
\begin{multline*}
\bigboxplus^{p_{n}}_{i_{n+2}\in\N} \ldots \bigboxplus^{p_{k+1}} _{i_{k+3}\in\N}
\bigboxplus^s _{i_{k+2}\in\N} \bigboxplus^r _{i_{k+1}\in\N}  \bigboxplus^{p_k} _{i_k \in\N} \ldots  \bigboxplus^{p_1} _{i_1 \in\N}
x_{N}\\ 
\leq \bigboxplus^{p_{n}}_{i_{n+2}\in\N} \ldots \bigboxplus^{p_{k+1}} _{i_{k+3}\in\N}
 \bigboxplus^r _{i_{k+1}\in\N} \bigboxplus^s _{i_{k+2}\in\N} \bigboxplus^{p_k} _{i_k \in\N} \ldots  \bigboxplus^{p_1} _{i_1 \in\N} x_{N}
\end{multline*}
where we consider $N=(i_1 , i_2 , \ldots , i_{n+2})$.

\end{proposition}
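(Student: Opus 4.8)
The plan is to reduce everything to the binary swap inequality recorded as Proposition 2.1 in \cite{Tal2}, which in the present notation says precisely that
\[
\bigboxplus^{s}_{j\in\N}\bigboxplus^{r}_{i\in\N} x_{ij}\ \leq\ \bigboxplus^{r}_{i\in\N}\bigboxplus^{s}_{j\in\N} x_{ij},\qquad 1\le r\le s<\infty,
\]
for any doubly-indexed family of non-negative numbers. In other words, pushing the \emph{larger} exponent $s$ to the \emph{outside} (equivalently, the smaller exponent $r$ to the inside) can only decrease the iterated $\boxplus$-value. The statement to be proved is nothing but the assertion that this single adjacent transposition may be performed while an arbitrary block of further $\boxplus$-operations is applied afterwards (the $\bigboxplus^{p_{k+1}},\dots,\bigboxplus^{p_{n}}$ layers) and an arbitrary block is applied beforehand (the $\bigboxplus^{p_{1}},\dots,\bigboxplus^{p_{k}}$ layers).

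First I would dispose of the inner block. Fix all the indices $i_{k+1},i_{k+2},\dots,i_{n+2}$ and regard
\[
y_{(i_{k+1},i_{k+2})}\ :=\ \bigboxplus^{p_k}_{i_k\in\N}\ldots\bigboxplus^{p_1}_{i_1\in\N} x_{N}
\]
as a new family of non-negative numbers indexed by the pair $(i_{k+1},i_{k+2})$ (with the remaining indices held as inert parameters). Here one only needs that $\bigboxplus^{p}$ of non-negative numbers is again non-negative and well defined, i.e. that the relevant $\ell^{p}$-sums are finite; if some sum diverges the inequality is trivial, so assume all are finite. This rewrites both sides of the claimed inequality as
\[
\bigboxplus^{p_{n}}_{i_{n+2}\in\N}\ldots\bigboxplus^{p_{k+1}}_{i_{k+3}\in\N}\ \Bigl(\bigboxplus^{s}_{i_{k+2}\in\N}\bigboxplus^{r}_{i_{k+1}\in\N} y_{(i_{k+1},i_{k+2})}\Bigr)
\]
and the same with $r$ and $s$ swapped, respectively.

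Next I would apply the binary swap: by Proposition 2.1 of \cite{Tal2}, for every fixed choice of the outer indices,
\[
\bigboxplus^{s}_{i_{k+2}\in\N}\bigboxplus^{r}_{i_{k+1}\in\N} y_{(i_{k+1},i_{k+2})}\ \leq\ \bigboxplus^{r}_{i_{k+1}\in\N}\bigboxplus^{s}_{i_{k+2}\in\N} y_{(i_{k+1},i_{k+2})}.
\]
Finally I would propagate this pointwise inequality through the remaining outer block $\bigboxplus^{p_{k+1}},\dots,\bigboxplus^{p_{n}}$. For this the only fact needed is monotonicity: each operation $(a_i)\mapsto\|(a_i)\|_{\ell^{p}}=\bigboxplus^{p}_i a_i$ is non-decreasing in each coordinate among non-negative sequences, hence composing finitely many such operations is monotone. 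Applying the left-hand quantity above coordinatewise dominates the right-hand one, and the outer block preserves this, giving exactly the asserted inequality after substituting back the definition of $y_{(i_{k+1},i_{k+2})}$.

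There is no genuine obstacle: the content is entirely in the binary case from \cite{Tal2}, and the rest is bookkeeping. The one point that requires a little care — and is the closest thing to a ``hard part'' — is the interchange of the order of summation needed to treat the inner and outer blocks as single monotone operations when infinitely many indices range over $\N$; this is justified because all terms are non-negative, so Tonelli's theorem (or, equivalently, the unconditional convergence of non-negative series) lets one freely regroup, and because each $\ell^{p}$-norm is a continuous, coordinatewise-monotone function of its non-negative argument. With those observations the proof is a two-line application of Proposition 2.1 in \cite{Tal2} together with monotonicity, exactly as the phrase ``inductively applying'' in the statement suggests.
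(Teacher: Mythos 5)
Your proposal is correct and follows essentially the same route as the paper, whose proof consists precisely of invoking Proposition 2.1 of \cite{Tal2}: you isolate the single adjacent $r$--$s$ transposition, apply that binary interchange inequality, and propagate it through the outer layers by coordinatewise monotonicity of the $\boxplus$-operations, with the inner block acting as an inert family of non-negative numbers. The only cosmetic point is that no Tonelli-type interchange is actually needed, since the inner block appears identically on both sides; your monotone-convergence remark suffices to cover possibly infinite sums.
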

\qed

\section{Rearrangements}
In the Nakano space a simultaneous measure-preserving permutation of the exponent $p(\cdot)$ and the function $f$ does
not affect the value of the norm. This is not the case in an ODE-determined variable $L^p$ space, since the order 
of the arrangement affects the accumulation of the solutions $\varphi_f$. 

It is easy to see that 
\[a \boxplus_\infty (b \boxplus_1 c ) \leq  (a \boxplus_\infty b) \boxplus_1 c  \]
for all $a,b,c \geq 0 $, and, more generally, 
\[a \boxplus_r (b \boxplus_p c ) \leq  (a \boxplus_r b) \boxplus_p c \] 
holds for all $1\leq p \leq r < \infty$ and $a,b,c \geq 0$.
This inequality generalizes considerably, as we saw in Proposition \ref{prop: multiboxplus}. 

It appears natural to ask whether a similar conclusion holds for ODE-determined variable Lebesgue spaces.
Namely, if we have a simultaneous rearrangement of the exponent and the function, does the increasing (resp. decreasing) arrangement yield the minimal (resp. the maximal) value of the norm? If so, is the ratio of the maximal value and the minimal value bounded and by what constant?

\subsection{Heuristic motivation via an auxiliary transformation}
We will begin with a useful transformation. Suppose that $p\colon [0,1] \to [1,\infty)$ is a measurable function and 
$f \in L_{\ode}^{p(\cdot)}$. It follows from the equivalence of the ODE-determined norm $\|\cdot\|_{p(\cdot)}$ and the 
corresponding Nakano norm $\iii  \cdot\iii  _{p(\cdot)}$ that
\[\int_{0}^1 \frac{|f(t)|^{p(t)}}{p(t)}\ dt < \infty.\]

Let us assume that $f\neq 0$ a.e. We denote by $\varphi_f$ the norm-determining weak solution with initial value $0^+$.

Let us define an absolutely continuous increasing bijective transform $T \colon [0,1] \to [0, \alpha ]$ as follows
\[T(t) = \int_{0}^t \frac{|f(s)|^{p(s)}}{p(s)}\ ds ,\quad 0\leq t\leq 1\]
where $\alpha=T(1)$.

Define $\hat{\varphi} \colon [0, \alpha ] \to [0,\infty)$ and $\hat{p} \colon [0, \alpha] \to [1,\infty)$ by $\hat{\varphi}(t) := \varphi_f (T^{-1} (t))$ and $\hat{p} (t) := p(T^{-1} (t))$.
Observe that
\begin{multline*}
\hat{\varphi}'(t) = \frac{d}{dt} \varphi_f (T^{-1} (t)) = \left(\frac{d}{dt} T^{-1} (t)\right) \varphi_{f}' (T^{-1} (t))\\
= \left(\frac{|f(T^{-1} (t))|^{p(T^{-1} (t))}}{p(T^{-1} (t))} \right)^{-1}
\frac{|f(T^{-1} (t))|^{p(T^{-1} (t))}}{p(T^{-1} (t))} \varphi_{f}^{1-p(T^{-1} (t) )}(T^{-1} (t))
=\hat{\varphi}^{1-\hat{p} (t)} (t).
\end{multline*}

This explains heuristically why decreasing (resp. increasing) arrangement of the exponent yields the greatest (resp. the least) norm. Namely, due to the exponent $1-\hat{p} (t)\leq 0$ on the right-hand side, we obtain large values for 
$\hat{\varphi}'$ when: 
\begin{itemize}

\item $\hat{p}$ is large for small values of $\hat{\varphi}$, 

\item $\hat{p}$ is small for large values of $\hat{\varphi}$.

\end{itemize}

\subsection{Quantitative estimates for rearranged norms}

By a measure-preserving transform on the unit interval we mean a bijection $\pi \colon [0,1] \to [0,1]$ 
such that for each Lebesgue measurable subset $E \subset [0,1]$ the preimage $\pi^{-1} (E)$ is Lebesgue measurable 
and $m(\pi^{-1} (E))=m(E)$. See \cite[Vol III]{Fremlin} for discussion on representations of 
measure algebra isomorphims.

\begin{theorem}\label{thm: rearrangement}
Let $p\colon [0,1]\to [1,\infty)$ be a measurable function. Let $\pi \colon [0,1] \to [0,1]$ be a measure-preserving transform. Suppose that $f \in L_{\ODE}^{p(\cdot)}$ and 
$ f\circ \pi^{-1} \in L_{\ODE}^{p\circ \pi^{-1}}$. 
Then 
\[\frac{1}{b_{\overline{p}}}\|f \|_{p(\cdot)} \leq \|f\circ \pi^{-1} \|_{p\circ \pi^{-1}} \leq b_{\overline{p}} \|f \|_{p(\cdot)}.\]
Moreover, if $\pi_1$ and $\pi_2$, as above, are chosen such that $p\circ \pi^{-1}_1$ (resp. $p\circ \pi^{-1}_2$)
is an increasing (resp. decreasing) rearrangement of $p$, then 
\[ \|f\circ \pi_{1}^{-1} \|_{p\circ \pi_{1}^{-1}} \leq \|f \|_{p(\cdot)} \leq  \|f\circ \pi_{2}^{-1} \|_{p\circ \pi_{2}^{-1}}.\] 
\end{theorem}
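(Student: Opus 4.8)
The first inequality is a direct comparison between the value of the ODE-determined norm before and after the measure-preserving rearrangement. The plan is to factor both $\|f\|_{p(\cdot)}$ and $\|f\circ\pi^{-1}\|_{p\circ\pi^{-1}}$ through the corresponding Nakano norms using Proposition \ref{prop: Nak}, which gives $\iii f\iii_{p(\cdot)} \leq \|f\|_{p(\cdot)} \leq b_{\overline p}\,\iii f\iii_{p(\cdot)}$ and likewise for $f\circ\pi^{-1}$ with exponent $p\circ\pi^{-1}$; note $\overline{p\circ\pi^{-1}}=\overline p$ since $\pi$ is measure-preserving. Since the Nakano norm is defined purely through the modular $\int \frac1{p(t)}|g(t)|^{p(t)}\,dt$, which is visibly invariant under a simultaneous measure-preserving permutation of $f$ and $p(\cdot)$, we have $\iii f\circ\pi^{-1}\iii_{p\circ\pi^{-1}} = \iii f\iii_{p(\cdot)}$. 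Chaining these three facts yields $\frac1{b_{\overline p}}\|f\|_{p(\cdot)} \leq \iii f\iii_{p(\cdot)} \leq \|f\circ\pi^{-1}\|_{p\circ\pi^{-1}}$ and symmetrically the upper bound, which is the first claim.

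\textbf{The monotone rearrangement statement.} For the second part I would argue directly with the ODE, using the change-of-variables computation from the heuristic subsection: after the absolutely continuous time change $T(t)=\int_0^t \frac{|f(s)|^{p(s)}}{p(s)}\,ds$, the transformed profile $\hat\varphi$ on $[0,\alpha]$ satisfies the autonomous-in-$f$ equation $\hat\varphi'(t) = \hat\varphi^{\,1-\hat p(t)}(t)$, and crucially the total "time" $\alpha$ and the distribution of $\hat p$ as a function on $[0,\alpha]$ depend only on the joint distribution of $(f,p)$, hence are \emph{unchanged} by any simultaneous measure-preserving rearrangement $\pi$. So the only thing that changes when we pass from $p$ to $p\circ\pi_1^{-1}$ (increasing) or $p\circ\pi_2^{-1}$ (decreasing) is the \emph{order} in which the values of $\hat p$ are encountered along $[0,\alpha]$. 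I would then set up a pointwise comparison lemma for the scalar ODE $y' = y^{1-q(t)}$, $y(0)=0^+$: if $q_1$ and $q_2$ are two measurable exponents on $[0,\alpha]$ with the same distribution and $q_1$ is arranged so that large values occur where the solution is small (i.e. early), then the solution driven by $q_1$ stays pointwise below that driven by $q_2$, and in particular $y_{q_1}(\alpha)\le y_{q_2}(\alpha)$. The extreme cases are $q_1=$ increasing rearrangement of $\hat p$ and $q_2=$ decreasing rearrangement, giving exactly $\|f\circ\pi_1^{-1}\|_{p\circ\pi_1^{-1}} \le \|f\|_{p(\cdot)} \le \|f\circ\pi_2^{-1}\|_{p\circ\pi_2^{-1}}$.

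\textbf{Proving the comparison lemma.} This is where the real work lies. The clean approach is a \emph{bathtub}/rearrangement-transport argument: reduce to the case where $q_1$ and $q_2$ differ by a single elementary swap of two level sets — a "descent" step in which a higher value of the exponent is moved to an earlier position and a lower value to a later position — and show each such swap can only decrease the terminal value $y(\alpha)$; then pass to the monotone rearrangement by iterating/limiting such swaps (or directly via a coupling of the distributions). For a single swap, one writes $y(\alpha) = \int_0^\alpha y^{1-q(t)}(t)\,dt$ using $y' = y^{1-q}$, and on the interval between the two swapped blocks the solution $y$ has increased; since $t\mapsto y^{1-q}$ is decreasing in $y$ when $q>1$ (and constant when $q=1$), moving the larger exponent to the region where $y$ is smaller and the smaller exponent to where $y$ is larger decreases $y^{1-q}$ pointwise on the first block relative to where it would otherwise be evaluated, hence decreases the accumulated integral — but one must be careful that $y$ itself also changes, so the argument should be run as a Grönwall-type comparison on the difference of the two solutions rather than a naive integral inequality. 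I expect the main obstacle to be making this swap argument rigorous in the presence of the weak (Carathéodory) solutions and the $0^+$ initial condition, and handling exponents that take the value $1$ (where $y^{1-q}\equiv 1$ and the ODE degenerates); the $0^+$ initial condition is actually helpful here because it pins down the solution uniquely, so I would first establish monotone dependence of $y_{q}(\alpha)$ on shifting mass of $q$ toward $0$ for simple (step) exponents and then use the density of simple semi-norms from \cite{Tal} together with the stability of the ODE in $\varphi^{1-p}$ to pass to general $p$.
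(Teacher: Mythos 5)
Your treatment of the first inequality line is exactly the paper's: Proposition \ref{prop: Nak} applied on both sides, combined with the invariance of the Nakano modular under a simultaneous measure-preserving rearrangement of $f$ and $p(\cdot)$ (and the observation that $\esssup p\circ\pi^{-1}=\overline{p}$). That part is correct and needs no further comment.

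For the monotone-rearrangement half, however, what you give is a plan whose central ingredient is missing, and as stated it is not even internally consistent. Your comparison lemma says that if the large values of $q$ are placed early, where the solution of $y'=y^{1-q(t)}$, $y(0)=0^+$, is small, then the solution stays \emph{below}; but placing large exponents early is the \emph{decreasing} arrangement and it \emph{maximizes} growth (for $y<1$ one has $y^{1-q}=y^{-(q-1)}\geq 1$, increasing in $q$), so the hypothesis you describe contradicts your identification of that case with the increasing rearrangement, even though your final displayed inequality is the correct one. More substantively, the swap-and-limit programme is precisely where the work lies, and you do not supply it. On a block where the exponent is a constant $q$ the ODE integrates to $y(t_2)^q=y(t_1)^q+q\,(t_2-t_1)$, so for step exponents your lemma collapses to the discrete exchange inequality $(a\boxplus_p b)\boxplus_r c\leq(a\boxplus_r c)\boxplus_p b$ for $p\leq r$, i.e. to Proposition \ref{prop: multiboxplus} — at that point your route and the paper's coincide. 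But a general measurable exponent cannot be reached from its monotone rearrangement by finitely many block swaps, so one must approximate: the paper takes simple semi-norms $N_n$ with $\tilde{p}_{N_n}\nearrow p$, rearranges them, and proves $\limsup_n\psi_n(1)\leq\varphi_0(1)$ via Lusin's theorem, convergence of the level sets of the rearranged exponents, and a pointwise comparison of $\psi_n^{1-p\circ\pi_n^{-1}}$ with $\varphi_0^{1-p\circ\pi^{-1}}$ under the condition \eqref{eq: if}; your final sentence only gestures at this step, and without it the argument for general measurable $p(\cdot)$ is not complete. Two smaller points: the time-change reduction requires $f\neq 0$ a.e. (otherwise $T$ is not invertible), so you should either assume this or excise the zero set of $f$, which contributes nothing to $\varphi_f$; and your observation that $\alpha$ and the distribution of $\hat{p}$ on $[0,\alpha]$ are invariants of the joint distribution of $(f,p)$ is correct and a pleasant normalization (it eliminates $f$ from the problem), but it does not by itself shorten the approximation argument that remains to be done.
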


We note that the constant $b_\infty =2$ above is the optimal one for the essentially unbounded $p(\cdot)$ case. This is seen by studying the norms of the constant function 
$\mathbf{1}$ considered in the natural way in $L^{p} (0,\frac{1}{\log p}) \oplus_{1} L^1 (\frac{1}{\log p},1)$, and, alternatively, in 
$L^1  (0,\frac{1}{\log p}) \oplus_{p} L^{p}(\frac{1}{\log p},1)$. Then 
\[\| \mathbf{1}\|_{L^{p} (0,\frac{1}{\log p}) \oplus_{1} L^1 (\frac{1}{\log p},1)}\to 2\quad \text{and}\quad \| \mathbf{1}\|_{L^1  (0,\frac{1}{\log p}) \oplus_{p} L^{p}(\frac{1}{\log p},1) }\to 1\quad \text{as}\ p\nearrow \infty.\]
The latter equation line in Theorem \ref{thm: rearrangement} tells us that the value of the norm of $\mathbf{1}$ is between $1$ and $2$ for any rearrangement of the above simple functions $p(\cdot )$.

\begin{proposition}
If $p(\cdot)$ is increasing then $\|\mathbf{1}\|_{p(\cdot)} \leq 1$ and if $p(\cdot)$ is decreasing then 
$\|\mathbf{1}\|_{p(\cdot)} \geq 1$. Moreover, if $p(\cdot )$ is additionally (essentially) a non-constant function then the inequalities hold strictly.
In particular, if $p(\cdot )$ is monotone and $\|\mathbf{1}\|_{p(\cdot)} =1$ then $p(\cdot )$ is a constant function.
\end{proposition}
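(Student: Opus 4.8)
The plan is to study the norm-determining weak solution $\varphi=\varphi_{\mathbf 1,p(\cdot)}$ of \eqref{eq: def} with $f=\mathbf 1$: the absolutely continuous, strictly increasing function with $\varphi(0)=0$ and $\varphi'(t)=\frac{1}{p(t)}\varphi(t)^{1-p(t)}$ a.e., whose value $\|\mathbf 1\|_{p(\cdot)}=\varphi(1)$ is to be estimated. The idea is to track the auxiliary quantity $\psi(t):=\varphi(t)^{p(t)}$ — the same quantity which becomes the independent variable under the transformation discussed earlier in this section. Since $p$ is monotone, hence of bounded variation, while $\varphi$ is continuous and positive on $(0,1]$, the function $\psi=\exp(p\log\varphi)$ has bounded variation on each $[\varepsilon,1]$, and the chain rule yields, for its absolutely-continuous-plus-continuous-singular part, the signed-measure identity
\[ d\psi \;=\; \psi\,d(p\log\varphi) \;=\; dt \;+\; \psi\,\log\varphi\;dp, \]
where I have used $p\,\varphi'/\varphi=\varphi^{-p}=1/\psi$; at a jump of $p$ the value of $\psi$ passes from $\varphi(t)^{p(t^-)}$ to $\varphi(t)^{p(t^+)}$, i.e.\ it jumps downward if $p$ is non-decreasing and $\varphi(t)\le1$, upward if $p$ is non-increasing and $\varphi(t)\le1$. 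Finally $\psi\log\varphi$ is bounded on $(0,1)$ — near $0$ because $\varphi^{p}\log(1/\varphi)\le\varphi\log(1/\varphi)\to0$, near $1$ because $\varphi$ stays bounded away from $0$ — and $\psi(0^+)=0$.

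For the non-decreasing case, note that as long as $\varphi\le1$ on $[0,t]$ we have $\log\varphi\le0$ and $dp\ge0$, so the identity gives $d\psi\le dt$ on the continuous part while the jumps of $\psi$ point downward, whence $\psi(t)\le\psi(0^+)+t=t$. Put $t^\ast:=\sup\{t:\varphi(t)\le1\}$; if $t^\ast<1$ then $\varphi(t^\ast)=1$ by continuity and monotonicity, so $1=\psi(t^{\ast-})\le t^\ast<1$, a contradiction, and therefore $\varphi\le1$ on $[0,1)$ and $\|\mathbf 1\|_{p(\cdot)}=\varphi(1)=\lim_{t\to1^-}\varphi(t)\le1$. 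If in addition $p$ is essentially non-constant, then $\varphi<1$ strictly on $(0,1)$ (a subinterval where $\varphi\equiv1$ would force $\varphi'=0$, against $\varphi'=1/p>0$ there), while $dp$ is a nonzero positive measure on $(0,1)$; hence the contribution $\int_{(0,1)}\psi\log\varphi\,dp^{c}$ together with the nonpositive jump contributions is strictly negative, so $\psi(1^-)<1$ and $\varphi(1)<1$ (using $\overline p=p(1^-)<\infty$; the essentially unbounded non-decreasing case reduces to this by applying it to the truncations $p\wedge M$ and letting $M\to\infty$, continuity of $\varphi(1)$ in the exponent being available from the construction in \cite{Tal}). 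The non-increasing case is dual: there $d\psi\ge dt$ and the jumps of $\psi$ point upward on any stretch where $\varphi\le1$, so $\psi(t)\ge t$ as long as $\varphi\le1$ on $[0,t]$; consequently, if $\varphi$ never reaches $1$ on $[0,1)$ then $\psi(1^-)\ge1$, which forces $\varphi(1)=1$, while otherwise $\varphi$ hits $1$ at some $t_0\le1$ and $\varphi(1)\ge\varphi(t_0)=1$. When $p$ is non-constant the same strict-deficit argument shows that the alternative "$\varphi<1$ on $[0,1)$" gives $\psi(1^-)>1$, hence $\varphi(1)>1$, which is incompatible with $\varphi(1)\le1$; so $\varphi$ hits $1$ already at some $t_0<1$ and then $\varphi(1)>\varphi(t_0)=1$. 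The last assertion is the contrapositive of the conjunction of the four inequalities just proved.

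The step I expect to be the main obstacle is making the measure-theoretic bookkeeping around $d\psi=dt+\psi\log\varphi\,dp$ fully rigorous: one must justify the chain rule for $\psi=\exp(p\log\varphi)$, a composition of the monotone (possibly jumping, possibly singular) factor $p$ with the continuous absolutely continuous factor $\log\varphi$, and verify that the continuous-singular part and each jump of $\psi$ carry precisely the sign claimed — which they do, exactly because each is weighted by $\log\varphi$, whose sign is controlled by whether $\varphi$ lies below or above $1$. A variant that sidesteps this calculus, at the cost of an approximation step, is to prove the inequalities first for step-function exponents: if $p$ takes values $p_1,\dots,p_n$ on consecutive intervals of lengths $\ell_1,\dots,\ell_n$ with $\sum\ell_i=1$, then $\|\mathbf 1\|_{p(\cdot)}=c_n$ with $c_1=\ell_1^{1/p_1}$ and $c_k^{p_k}=c_{k-1}^{p_k}+\ell_k$, and a one-line induction shows $c_k^{p_k}\le L_k:=\ell_1+\dots+\ell_k$ in the non-decreasing case (from $c_{k-1}\le L_{k-1}^{1/p_{k-1}}\le1$ and $p_k\ge p_{k-1}$ one gets $c_{k-1}^{p_k}\le c_{k-1}^{p_{k-1}}\le L_{k-1}$) and symmetrically $c_k^{p_k}\ge L_k$ in the non-increasing case; one then passes to general monotone $p$ by approximating it monotonically by step functions and using continuity of $\varphi(1)$ in the exponent.
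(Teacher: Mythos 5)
Your primary argument takes a genuinely different route from the paper's. You work directly with the ODE, tracking $\psi(t)=\varphi(t)^{p(t)}$ via the signed-measure identity $d\psi=dt+\psi\log\varphi\,dp$, so that monotonicity of $p$ fixes the sign of the correction term and of the jumps; the paper instead proves the discrete inequality $((a_1^{p_2/p_1}+a_2)^{p_3/p_2}+a_3)^{1/p_3}\le(a_1+a_2+a_3)^{1/p_3}$ for increasing exponents and passes to general monotone $p$ by the simple semi-norm approximation of \cite{Tal} -- which is exactly your fallback recursion $c_k^{p_k}=c_{k-1}^{p_k}+\ell_k$, $c_k^{p_k}\le L_k$, so your ``variant that sidesteps the calculus'' essentially reproduces the paper's proof of the non-strict inequalities. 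For strictness the paper compares the solutions attached to the increasing and decreasing rearrangements of $p$ past their last crossing point (a rather terse argument), whereas your strict-deficit bookkeeping (a portion of the $dp$-mass sits in a compact subinterval of $(0,1)$ where $\psi\log\varphi$ is bounded away from $0$) gives a cleaner, quantitative reason and needs no rearrangement. The price is the chain rule for $\psi=\exp(p\log\varphi)$ with $p$ of bounded variation, which you rightly flag; since $\log\varphi$ is absolutely continuous on $[\varepsilon,1]$ and the jump terms are accounted for separately with the correct signs, this is standard, and your boundary checks ($\psi\log\varphi\to0$ and $\psi(0^+)=0$) are the right ones.

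One genuine flaw: the parenthetical reduction of the essentially unbounded non-decreasing case to the bounded one via the truncations $p\wedge M$ fails -- strict inequalities do not survive the limit $M\to\infty$, and in fact the strict conclusion is false in that case. Take $p=1$ on $[0,1/2]$ and $p=q_n$ on $[1-2^{-n},1-2^{-n-1})$ with $q_n=4^n$: the exact recursion $d_n^{q_n}=d_{n-1}^{q_n}+2^{-(n+1)}$ gives $d_n\ge 2^{-(n+1)/q_n}\to1$, so $\|\mathbf 1\|_{p(\cdot)}=\varphi(1)=1$ although $p(\cdot)$ is increasing and non-constant. So delete that parenthesis and state the strict part under the paper's standing restriction that $p(\cdot)$ is essentially bounded (your argument only needs $p(1^-)<\infty$ in the increasing case; for decreasing $p$, possibly unbounded near $0$, it is fine as written, since the relevant $dp$-mass can be placed in a compact subinterval of $(0,1)$ where $p$ is bounded). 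With that caveat the proposal is sound, and arguably more informative than the paper's own strictness argument.
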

\begin{proof}
Suppose that $1\leq p_1 \leq p_2 \leq \ldots \leq p_n$ and $0<a_1 , a_2 , \ldots , a_n  <1$ such that
$a_1 + \ldots + a_n \leq 1$.
Then
\[(a_{1}^{\frac{1}{p_1}} \boxplus_{p_2} a_{2}^\frac{1}{p_2} )\boxplus_{p_3} a_{3}^{\frac{1}{p_3}} = ((a_{1}^\frac{p_2}{p_1} + a_{2})^{\frac{p_3}{p_2}}+a_{3})^{\frac{1}{p_3}} \leq (a_1 + a_2 + a_3 )^{\frac{1}{p_3}} .\]
Similarly
\[(\ldots (a_{1}^{\frac{1}{p_1}} \boxplus_{p_2} a_{2}^{\frac{1}{p_1}} )\boxplus_{p_3} \ldots )\boxplus_{p_n} 
a_{n}^{\frac{1}{p_n}} \leq (a_1 + \ldots + a_n )^{\frac{1}{p_n}} \leq 1.\]
An approximation argument with semi-norms (recall in \cite{Tal}) then yields that if $p(\cdot)$ is increasing then $\|\mathbf{1}\|_{p(\cdot)} \leq 1$. 
Similarly we see the other direction.

To check the strict inequality, let $\varphi$ and $\psi$ be the solutions corresponding to decreasing and increasing rearrangements of $p$. Suppose that $\varphi (1)\leq \psi(1)$ and 
let $t_0 <1$ be the smallest number such that $\varphi (t) \leq \psi(t)$ for $t_0 \leq  t \leq 1$.
Then $\varphi' (t) \geq \psi' (t)$ for $t_0 \leq  t \leq 1$ and the inequality is strict in some segment
$[s,1]$ if $p$ is non-constant. 
\end{proof}

\begin{proof}[Sketch of proof of Theorem \ref{thm: rearrangement}]
To justify the first inequality line, it suffices to prove the right hand inequality, as the other estimate then follows.
We apply the equivalence of ODE-determined norms and Nakano norms together with the rearrangement invariance of the 
Nakano norms. That is, one may rearrange the function and the exponent in a similar fashion without affecting the norm. Thus we obtain 

\[\|f\circ \pi^{-1} \|_{p\circ \pi^{-1}} \leq b_{\overline{p}} \iii f\circ \pi^{-1} \iii_{p\circ \pi^{-1}}
=b_{\overline{p}} \iii f \iii_{p} \leq b_{\overline{p}} \|f \|_{p}.\]

Towards the second claim, it follows as a special case from Proposition \ref{prop: multiboxplus} that 
\[(a \boxplus_p b) \boxplus_r c \leq (a \boxplus_r c) \boxplus_p b,\quad \text{for\ all}\ a,b,c>0,\ 1\leq p \leq r .\]
We may apply the above fact recursively for simple semi-norms, changing the places of $2$ successive summands, $1$ pair at a time, in checking the following claim:
\begin{multline*}
|(0,f_{j_2} , f_{j_3},\ldots , f_{j_n})|_{(L^{p_{1}}(\mu_{1} ) \oplus_{p_{j_2}}  L^{p_{j_2}}(\mu_{j_2} ))\oplus_{p_{j_3} }\ldots \oplus_{p_{j_n} } L^{p_{j_n} }(\mu_{j_n} )}\\ 
\leq |(0 , f_2 , \ldots , f_n )|_{(L^{p_1}(\mu_1 ) \oplus_{p_2}  L^{p_2}(\mu_2 ))\oplus_{p_3 }\ldots \oplus_{p_n } L^{p_n }(\mu_n )} \\
\leq 
|(0,f_{k_2}, f_{k_3}, \ldots , f_{k_n})|_{(L^{p_{1}}(\mu_{1} ) \oplus_{p_{k_2}}  L^{p_{k_2}}(\mu_{k_2} ))\oplus_{p_{k_3} }\ldots \oplus_{p_{k_n} } L^{p_{k_n} }(\mu_{k_n} )}
\end{multline*}
where we assume $\mu_1 ([0,1])=0$ for technical reasons and $\{p_{j_i}\}_{i=2}^n$ 
(resp. $\{p_{k_i}\}_{i=2}^n$) is increasing (resp. decreasing) permutation of $\{p_i \}_{i=2}^n$.

To extend the above observation to the general setting, with decreasing exponent and maximal norm, we approximate $p(\cdot)$ by simple 
seminorms $N_n$ such that $\tilde{p}_{N_n} \nearrow p(\cdot)$ in measure. It is known that then also 
$N_n (f)\to \|f\|_{p(\cdot)}$ for any $f\in L^{p(\cdot)}$. It suffices to consider essentially bounded $p$ and $f$.

Let $\pi_n$ be simple measurable measure-preserving transformations such that $\tilde{p}_{N_n} \circ  \pi_{n}^{-1}$ become decreasing. Let $\psi_n$ be the corresponding solutions.
Note that by the above observations regarding the arrangements of the simple semi-norms, we have 
$N_n (f) \leq \psi_n (1)$. 

Suppose, as in the assumptions, that $\pi$ is a measurable measure-preserving transform such that $p\circ \pi^{-1}$ is decreasing. Let $\varphi_0 $ be the corresponding solution and recall that the solution $\varphi_0$ is (absolutely) continuous. 

Then for any $p_1 <p_2 $ and $\varepsilon>0$ we see by using the essential boundedness of $f$ and $p(\cdot)$ that for sufficiently large $n$ we have
\begin{equation}\label{eq: interv}
\int_{\{t:\ p_1 < p\circ  \pi_{n}^{-1}(t)< p_2\}} \psi'_n \ dt \leq  \int_{\{t:\ p_1 < p\circ \pi^{-1}(t)< p_2\}} \varphi'_0 \ dt + \varepsilon
\end{equation}
\begin{equation}\label{eq: if}
\text{if}\ \psi_n (t_0 ) \geq \varphi_0 (t_0 )\ \text{for}\ t_0 =\essinf \{t:\ p_1 < p\circ \pi^{-1}(t)< p_2\}.
\end{equation}
 
Indeed, by considering the distribution function $s\mapsto m(\{t\colon p(t)\geq s\})$ we obtain that  
\[\essinf \{t:\ p_1 < p\circ \pi_{n}^{-1}(t)< p_2\} \to \essinf \{t:\ p_1 < p\circ \pi^{-1}(t)< p_2\},\]
\[\esssup \{t:\ p_1 < p\circ \pi_{n}^{-1}(t)< p_2\} \to \esssup \{t:\ p_1 < p\circ \pi^{-1}(t)< p_2\}\]
as $n\to\infty$. 
It follows that 
\[\int_{\{t:\ p_1 < p\circ \pi^{-1} (t)< p_2\}} \left|\frac{|f\circ \pi_{n}^{-1}(t)|^{p\circ \pi_{n}^{-1}(t)}}{p\circ \pi_{n}^{-1}(t)} - \frac{|f\circ \pi^{-1}(t)|^{p\circ \pi^{-1}(t)}}{p\circ \pi^{-1}(t)} \right| dt \to 0 \]
as $n\to\infty$ by Lusin's theorem. 
Then \eqref{eq: interv} follows by including the respective terms $\psi_{n}^{1- p\circ \pi_{n}^{-1}}$ and 
$\varphi_{0}^{1- p\circ \pi^{-1}}$ according to \eqref{eq: if} (roughly $\psi_{n}^{1- p_1} \leq \varphi_{0}^{1- p_1}$ in the subset under consideration). Then
\begin{multline*}
\limsup_{n\to\infty} \psi_{n} (1) = \limsup_{n\to\infty} \int_{0}^1 \frac{|f\circ \pi_{n}^{-1}(t)|^{p\circ \pi_{n}^{-1}(t)}}{p\circ \pi_{n}^{-1}(t)} \psi_{n}^{1- p\circ \pi_{n}^{-1}}\ dt\\
\leq \int_{0}^1  \frac{|f\circ \pi^{-1}(t)|^{p\circ \pi^{-1}(t)}}{p\circ \pi^{-1}(t)} \varphi_{0}^{1- p\circ \pi^{-1}} \ dt = \varphi_{0} (1). 
\end{multline*}
We conclude that 
\[\|f\|_{p(\cdot)} = \lim_{n\to\infty} N_n (f) \leq \varphi_0 (1) .\]
\end{proof}

\section{Decompositions}

Next we arrive to the main topic of this paper which involves representing Nakano spaces in a natural tractable way
as summands of their projection bands. We will apply the observations from the previous section.

The decompositions of classical $L^p$ spaces provide some insight on what to expect, albeit, in some ways, too simplistic a view in variable Lebesgue space setting. 
For instance, in the classical setting we have  
\[L^p [0,1] = L^p [0, 1/2 ] \oplus_p L^p [1/2, 1]\]
and 
\[\|f\|_p \leq \|f\|_r ,\quad p\leq r,\]
therefore 
\[\|1_{[0,\frac{1}{2}]} f\|_p \boxplus_r \|1_{[\frac{1}{2},1]} f\|_r \leq \|f\|_r .\]
However, only a quasi-version of the above inequality holds in the variable Lebesgue space setting, and, in fact, the 
norm is not monotone with respect to the exponents $p(\cdot)$.

\begin{theorem}\label{thm: main_decomp}
Let $p(\cdot )$ be essentially bounded and $f \in L_{\ODE}^{p(\cdot)}$. Let $1= r_1 < r_2 < \ldots < r_n =\overline{p}$. Then 
\begin{multline*}
\frac{1}{ 2 (1+ae)} (\iii  1_{p^{-1}(r_{n-1} , r_n ]}f \iii  _{r_{n-1}} \boxplus_{r_{n-2}} \iii  1_{p^{-1}(r_{n-2} , r_{n-1}]}f \iii  _{r_{n-2}} ) \boxplus_{r_{n-3}} \ldots \boxplus_{r_1}  \iii  1_{p^{-1}[r_{1}, r_2 ]}f \iii  _{r_1}\\
\leq \iii  f\iii  _{p(\cdot)}
\leq   2 (1+ae) (\iii  1_{p^{-1}[r_1 , r_2 )}f \iii  _{r_2} \boxplus_{r_3} \iii  1_{p^{-1}[r_2 ,r_3 )}f \iii  _{r_3 })\boxplus_{r_4} \ldots 
\boxplus_{r_{n}}  \iii  1_{p^{-1}[r_{n-1},r_n ]}f \iii  _{r_{n} }.
\end{multline*}
More generally, if $\Delta_i$, $i=1,\ldots , n$, form a measurable decomposition of $[0,1]$, and 
$r_i = \essinf_{\Delta_i}p$, $s_i = \esssup_{\Delta_i}p$, then 
\begin{multline*}
\frac{1}{ 2 (1+ae)} (\iii  1_{\Delta_1}f \iii  _{r_1 } \boxplus_{r_2} \iii  1_{\Delta_2}f \iii  _{r_2 })\boxplus_{r_3} \ldots 
\boxplus_{r_{n}}  \iii  1_{\Delta_n }f \iii  _{r_{n} } 
\leq \iii  f\iii  _{p(\cdot) }\\ 
\leq  2 (1+ae) (\iii  1_{\Delta_1}f \iii  _{s_1 } \boxplus_{s_2} \iii  1_{\Delta_2}f \iii  _{s_2 })\boxplus_{s_3} \ldots 
\boxplus_{s_{n}}  \iii  1_{\Delta_n }f \iii  _{s_{n} } .
\end{multline*}
Moreover, 
\begin{multline*}
\frac{1}{b_{\overline{p}}}(\|1_{\Delta_1}f \| \boxplus_{s_2} \|1_{\Delta_2}f \|)\boxplus_{s_3} \ldots 
\boxplus_{s_{n}}  \|1_{\Delta_n }f \| \leq \|f\| _{p(\cdot)}\\
\leq  b_{\overline{p}}(\|1_{\Delta_1}f \| \boxplus_{r_2} \|1_{\Delta_2}f \|)\boxplus_{r_3} \ldots 
\boxplus_{r_{n}}  \|1_{\Delta_n }f \| 
\end{multline*}
and
\begin{multline*}
\frac{1}{b_{\overline{p}}} (\iii  1_{\Delta_1}f \iii   \boxplus_{s_2} \iii  1_{\Delta_2}f \iii  )\boxplus_{s_3} \ldots 
\boxplus_{s_{n}}  \iii  1_{\Delta_n }f \iii   \leq \iii  f\iii _{p(\cdot)}  \\ 
\leq  b_{\overline{p}} (\iii  1_{\Delta_1}f \iii   \boxplus_{r_2} \iii  1_{\Delta_2}f \iii  )\boxplus_{r_3} \ldots 
\boxplus_{r_{n}}  \iii  1_{\Delta_n }f \iii   .
\end{multline*}
\end{theorem}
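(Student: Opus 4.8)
The plan is to prove the four displays in sequence, with each but the last reducing to a common engine: combine the rearrangement-type inequality for iterated $\boxplus$ operations (Proposition~\ref{prop: multiboxplus}, in the $n=2$ special case $(a\boxplus_p b)\boxplus_r c\le (a\boxplus_r c)\boxplus_p b$ for $1\le p\le r$) with the comparison estimates of Proposition~\ref{prop: ineq}. The third display is the cleanest, so I would do it first: it is purely about the ODE-determined norm. The upper estimate $\|f\|_{p(\cdot)}\le b_{\overline p}(\cdots)$ follows by passing through the corresponding inequalities on each band $\Delta_i$ — note $r_i=\essinf_{\Delta_i}p$ so that, by Proposition~\ref{prop: ineq}(2) adapted to the ODE norm (via Proposition~\ref{prop: Nak}), $\|1_{\Delta_i}f\|$ controls $\|1_{\Delta_i}f\|_{r_i}$ up to the constant — and then assembling the $r_i$-blocks using the iterated $\boxplus$ structure that the ODE norm carries (this is precisely the block-structure recalled in the rearrangement section). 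The lower estimate uses $s_i=\esssup_{\Delta_i}p$ symmetrically. The factor $b_{\overline p}$ absorbs both the Nakano--ODE passage (Proposition~\ref{prop: Nak}) and the monotone reordering cost.

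Next I would prove the second display (iterated Nakano norms with the $r_i,s_i$). Here the strategy is to sandwich $\iii f\iii_{p(\cdot)}$ between the ODE norm (Proposition~\ref{prop: Nak}: factor $b_{\overline p}\le 2$) and then invoke the third display, paying the additional $(1+ae)$ factor that converts each block $\|1_{\Delta_i}f\|$ or $\iii 1_{\Delta_i}f\iii$ into $\iii 1_{\Delta_i}f\iii_{r_i}$ (resp.\ $\iii 1_{\Delta_i}f\iii_{s_i}$) through Proposition~\ref{prop: ineq}(2). Tracking constants: each conversion costs $(1+ae)$ on one side, the Nakano--ODE step costs $b_{\overline p}\le 2$, and a possible reordering of the $\boxplus$ operations via the $(a\boxplus_p b)\boxplus_r c\le(a\boxplus_r c)\boxplus_p b$ inequality is free; so the total is at most $2(1+ae)$, matching the statement. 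The first display is then the special case $\Delta_i=p^{-1}(r_{i-1},r_i]$ (with the endpoint conventions as written), so it follows immediately once the monotonicity of $i\mapsto r_i$ on these particular sets is observed — there $\essinf_{\Delta_i}p=r_{i-1}$ and $\esssup_{\Delta_i}p=r_i$, up to measure-zero adjustments. The fourth display is the analogue of the third with Nakano norms in place of ODE norms on the blocks; it follows by applying Proposition~\ref{prop: Nak} blockwise to the third display and noting the constants telescope into a single $b_{\overline p}$ because $\iii 1_{\Delta_i}f\iii\le\|1_{\Delta_i}f\|\le b_{\overline p}\iii 1_{\Delta_i}f\iii$ and the outer comparison $\iii f\iii_{p(\cdot)}\le\|f\|_{p(\cdot)}\le b_{\overline p}\iii f\iii_{p(\cdot)}$ can be split so that only one factor of $b_{\overline p}$ survives on each side.

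The main obstacle will be the reordering/assembly step: showing that the iterated $\boxplus$ expression over the bands $\Delta_i$ genuinely lower- and upper-bounds the ODE norm of $f$ on the union $\bigcup\Delta_i=[0,1]$, with the exponents read off as $r_i$ (resp.\ $s_i$) rather than the true varying $p(\cdot)$. The natural route is the semi-norm approximation of \cite{Tal}: approximate $p(\cdot)$ in measure by simple exponents $\tilde p_{N_k}\nearrow p(\cdot)$, for which the ODE norm is literally an iterated $\boxplus$ over the level sets, then pass to the limit using $N_k(f)\to\|f\|_{p(\cdot)}$ exactly as in the proof of Theorem~\ref{thm: rearrangement}; on each finite-level semi-norm the band decomposition is exact and the desired inequalities are instances of the $\boxplus$-reordering proposition together with the elementary fact that replacing an exponent by a smaller (larger) constant on a band increases (decreases) the contribution. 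One must be careful that the bounds are uniform in $k$ so the limit survives, and that the endpoint ambiguities in $p^{-1}(r_{i-1},r_i]$ versus $p^{-1}[r_{i-1},r_i)$ only affect a null set, which is harmless since the ODE norm ignores null sets. I expect no difficulty beyond bookkeeping once the semi-norm reduction is in place.
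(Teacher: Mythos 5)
Your toolkit is the right one (simple semi-norms, Proposition \ref{prop: multiboxplus}, Propositions \ref{prop: ineq} and \ref{prop: Nak}, Theorem \ref{thm: rearrangement}), and your plan for the third display essentially matches the paper: for successive bands the two-summand inequality is exact at the level of simple semi-norms via the $\boxplus$-reordering, and $b_{\overline{p}}$ enters only through Theorem \ref{thm: rearrangement} when the $\Delta_i$ are not successive (not through a ``Nakano--ODE passage'' --- no Nakano norms occur in that display). The genuine gap is your plan to deduce the second (hence the first) display from the third by blockwise conversions. The two displays pair block exponents and combination exponents differently: the third display's upper bound uses the combinations $\boxplus_{r_i}$ and its lower bound $\boxplus_{s_i}$, whereas the second display's upper bound must produce $\boxplus_{s_i}$ and its lower bound $\boxplus_{r_i}$. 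Converting blocks via Proposition \ref{prop: ineq}(2) and Proposition \ref{prop: Nak} only rescales each block; it cannot change the combination exponents, and the change you need goes the wrong way: since $a\boxplus_r b\geq a\boxplus_s b$ for $r\leq s$, an upper bound against an $r_i$-combined expression does not imply one against the $s_i$-combined expression with any constant independent of $n$ (take all blocks equal and $r_i\equiv 1$, $s_i\equiv 2$: the ratio of the two expressions grows like $n^{1/r-1/s}$). So the reduction fails exactly where the theorem's point --- $n$-independent constants --- lies. The paper instead proves the first two displays directly: rearrange, using the invariance of the Nakano norm under simultaneous measure-preserving rearrangement of $f$ and $p(\cdot)$, so that the bands become successive intervals (with $p$ decreasing for the first display); observe that the iterated $\boxplus$ expression is then exactly the ODE norm $\|f\|_{\tilde{p}(\cdot)}$ for the piecewise-constant comparison exponent $\tilde{p}=r_i$ (resp.\ $s_i$) on the bands; and apply Proposition \ref{prop: ineq}(2) (cost $1+ae$, since $\tilde{p}\leq p$ resp.\ $\tilde{p}\geq p$ a.e.) together with Proposition \ref{prop: Nak} (cost $\leq 2$).

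Two further points. First, the fourth display does not follow from the third display as stated by blockwise use of Proposition \ref{prop: Nak}: each side would pick up $b_{\overline{p}}$ twice (once from the third display, once from converting the blocks), giving $b_{\overline{p}}^2$, and your remark that the constants ``telescope'' is not substantiated. The paper gets the single factor by first rearranging --- free, since both $\iii f\iii_{p(\cdot)}$ and the blocks $\iii 1_{\Delta_i}f\iii$ are invariant --- so that the bands are successive, where the third display's argument holds with constant $1$, and only then converting ODE norms to Nakano norms once. Second, be careful with your ``elementary fact'' that replacing an exponent by a smaller constant on a band increases the contribution: this is exact only for the combination exponents in the $\boxplus$ expressions; for the integrand exponent $p(\cdot)$ the norm is \emph{not} monotone (the paper stresses this just before the theorem), and that failure is precisely what the constant $1+ae$ of Proposition \ref{prop: ineq}(2) pays for.
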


Admittedly, the formulas appear complicated in the general case with $n$ fixed constant exponents. The point here is that the multiplicative constants above do not depend on $n$.
Note that in view of Theorem \ref{thm: rearrangement} the ordering of $r_i$ in the first part of the statement is chosen to be 'the worst possible', so that the given multiplicative 
constants apply to all possible orderings. The illustrate the first claim, we have
\[\frac{1}{12} \left(\iii  1_{p(\cdot) \geq r} f \iii_{r} + \iii  1_{p(\cdot) <r }f \iii_{1}\right)  
\leq  \iii  f \iii_{p(\cdot)} 
\leq 12 \left(\iii  1_{p(\cdot) <r }f \iii_{r}   \boxplus_{\overline{p}}  \iii  1_{p(\cdot) \geq r} f \iii_{\overline{p}} \right).\]

\begin{proof}[Proof of Theorem \ref{thm: main_decomp}]
To prove the first inequality, we may apply a measure-preserving rearrangement of the unit interval such that $p$ becomes decreasing under the new ordering. Indeed, by an approximation argument employing Lusin's theorem 
we may reduce to the case where $p(\cdot)$ is continuous and apply methods similar to the proof 
of Theorem \ref{thm: rearrangement}.  

The Nakano norm is invariant under simultaneous measure-preserving rearrangements of the function and the exponent, that is,
\[\iii f \circ h \iii_{p\circ h} = \iii f \iii_{p} \]
based on the change of variable 
\[\int_{0}^1 \frac{1}{p(t)} \frac{|f(t)|^{p(t)}}{\lambda^{p(t)}}\ dm(t) =  \int_{0}^1 \frac{1}{p(h(t))} \frac{|f(h(t))|^{p(h(t))}}{\lambda^{p(h(t))}}\ dm(t) .\]
Here $h\colon [0,1] \to [0,1]$ is a mapping such that $h(A)$ is measurable if and only if $A$ is measurable and 
$m(h(A))=m(A)$ for each measurable $A$. The Radon-Nikodym derivative satisfies $\frac{d(m \circ h)}{dm}\equiv 1$ due to the measure-preserving property and hence can be disregarded.

Therefore, we may, without loss of generality, assume that $p$ is decreasing.

Let us choose a varying-exponent $\tilde{p}$ corresponding to the space 
\[(L^{r_{n-1}} (p^{-1}(r_{n-1} , r_n ]) \oplus_{r_{n-2}} L^{r_{n-2}} (p^{-1}(r_{n-2} , r_{n-1} ])) \oplus_{r_{n-3}} \ldots \oplus_{r_{1}} L^{r_1 }  (p^{-1}[r_{1} , r_{2}]) .\] 
Recall Propositions \ref{prop: ineq} and \ref{prop: Nak}. Then 
\begin{multline*}
(\iii  1_{ p^{-1}(r_{n-1} , r_n ]}f \iii  _{r_{n-1}} \boxplus_{r_{n-2}} \iii  1_{p^{-1}(r_{n-2} , r_{n-1} ]}f \iii  _{r_{n-2}})\boxplus_{r_{n-3}} \ldots \boxplus_{r_{1}}  \iii  1_{p^{-1}[r_{1},r_2 ]}f \iii  _{r_1}\\ 
\leq  (\|1_{p^{-1}(r_{n-1} , r_n ]}f \|_{r_{n-1}} \boxplus_{r_{n-2}} \|1_{p^{-1}(r_{n-2} , r_{n-1} ]}f \|_{r_{n-2}})\boxplus_{r_{n-3}} \ldots \boxplus_{r_{1}}  \|1_{p^{-1}[r_{1},r_2 ]}f \|_{r_1}\\
=\|f\|_{\tilde{p} (\cdot)}\leq  (1+ae) \|f\|_{p (\cdot)}  \leq 2 (1+ae) \iii  f\iii  _{p(\cdot)}.
\end{multline*}
The right-hand inequality is seen in a similar manner.
\\

{\noindent \it The second part of the statement} is also seen in a similar way by first rearranging the unit interval such that the images of $\Delta_i$ become successive.
\\

{\noindent \it The third claim} follows inductively by considering first the case with $2$ summands only:
\[\|1_{\Delta_1}f \| \boxplus_{r} \|1_{\Delta_2}f \| \geq \|1_{\Delta_1 \cup \Delta_2} f \|\]
for the right-hand side.
The argument uses approximating simple semi-norms and the following observations, where $r\leq \min_k p_k$:
\begin{multline*}
( \ldots (a_1 \boxplus_{p_1} a_2) \boxplus_{p_2} \ldots \boxplus_{p_k} a_k )\boxplus_{p_{k+1}} a_{k+1} )\boxplus_{p_{k+2}} a_{k+2} \ldots \\
\leq ( \ldots (a_1 \boxplus_{p_1} a_2) \boxplus_{p_2} \ldots \boxplus_{p_k} a_k )\boxplus_{r} a_{k+1} )\boxplus_{p_{k+2}} a_{k+2} \ldots\\
\leq ( \ldots (a_1 \boxplus_{p_1} a_2) \boxplus_{p_2} \ldots \boxplus_{p_k} a_k ) \boxplus_{r} 
( a_{k+1}  \boxplus_{p_{k+2}} a_{k+2} \ldots )
\end{multline*}
Indeed, the last inequality follows from Proposition \ref{prop: multiboxplus} by putting 
\begin{multline*}
x_{1,1,1,1,1,\dots}= a_1,\  x_{2,1,1,1,1,\dots} = a_2,\  x_{1,2,1,1,1,\dots} = a_3,\  x_{1,1,2,1,1,\dots} = a_4,\\
x_{1,1,1,2,1,\dots} = a_5, \ldots , x_{1,1,1,\ldots , 1, 2,1,1,\ldots}=a_k,
\end{multline*}
\[x_{1,1,1,\ldots , 1, 2,2,1,1,\ldots}=a_{k+1},\ x_{1,1,1,\ldots ,1,  2,1,2,1,1,\ldots}=a_{k+2},\ x_{1,1,1,\ldots ,1,  2,1,1,2,1,\ldots}=a_{k+3},\ldots \]
and $0$ for other entries. The $\bigboxplus_r$ operation can be inductively moved to be the first operation on the left, thus producing the required inequality.

If the intervals are successive, then we are done, otherwise we apply Theorem 
\ref{thm: rearrangement} which involves the constant $b_{\overline{p}}$.
The left-hand inequality is seen similarly.
\\

{\noindent \it The last claim} is verified by using the previous claim with successive intervals and the invariance property of the 
Nakano norm together with the inequality
\[ \iii  f\iii  _{p(\cdot)} \leq \|f\|_{p(\cdot)}\leq  b_{\overline{p}}  \iii  f\iii  _{p(\cdot)} \]
from Proposition \ref{prop: Nak}.
\end{proof}

%{prop: ineq}{prop: Nak}{thm: rearrangement}

\section{Final Remarks}

\subsection{Reduction to the $n=1$ case}\label{sect: red}

Next we will give two results which yield Corollary \ref{cor: isom} in the beginning of the paper.
We use the same notations. 

\begin{theorem}\label{thm: identification}
Consider the setting of Corollary \ref{cor: isom} with $\mu(\R^n)>0$. Then there is a bijection 
$h\colon \mathrm{supp}(\mu) \to I$, where $I=\left[0,\mu(\R^n)\right)$, such that 
\[(T f)[h(x)] = f[x] \quad \text{for}\ \mu\text{-a.e.}\ x\in \R^n \] 
defines an isometric isomorphism
\[T\colon L^{p(\cdot)} (\R^n, \mathrm{Bor}(\R^n), m_n , \omega) \to L^{r(\cdot)} (I,\mathrm{Bor}\left(I\right) , m_1 , \omega') \] 
where 
\[r(h(x)) =p(x)\quad \text{and} \quad     \omega' (h(x)) = \omega(x) \quad \text{for}\ \mu\text{-a.e.}\ x\in \R^n .\]
\end{theorem}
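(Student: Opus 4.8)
The plan is to derive the result from two ingredients: the invariance of variable Lebesgue modulars under measure‑preserving maps (the change of variables already used for the rearrangement invariance of the Nakano norm), and the classical isomorphism theorem for standard measure spaces. Concretely, $h$ will be a measure‑preserving \emph{point} isomorphism of $\mathrm{supp}(\mu)$ onto the interval $I$, and $T$ the composition operator it induces. For Step~1, after discarding the $m_n$‑null set on which $\omega$ vanishes, we work on the Borel set $X:=\mathrm{supp}(\mu)\subseteq\R^n$ equipped with $\mu|_X$. Since $\mu\ll m_n$ and $m_n$ is atomless, $\mu$ is atomless; it is $\sigma$‑finite, so $(X,\mathrm{Bor}(X),\mu)$ is an atomless $\sigma$‑finite standard measure space of total mass $\mu(X)=\mu(\R^n)$, equal to the length of $I=[0,\mu(\R^n))$ (read as $[0,\infty)$ when $\mu(\R^n)=\infty$). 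By the isomorphism theorem for such spaces (see \cite[Vol.\ III, \S344]{Fremlin}; equivalently, the Borel isomorphism theorem together with the fact that an atomless probability space is measure‑isomorphic to a subinterval of the line), there exist conull Borel sets $X_0\subseteq X$ and $I_0\subseteq I$ and a Borel isomorphism $h_0\colon X_0\to I_0$ which is measure preserving for the relevant restrictions.

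For Step~2 we upgrade $h_0$ to an honest bijection. The leftover sets $X\setminus X_0$ and $I\setminus I_0$ are null Borel sets; after transferring, if necessary, a null Borel piece from the larger to the smaller so that the two have equal cardinality, we extend $h_0$ by an arbitrary bijection between these pieces to obtain a bijection $h\colon X\to I$. Because the modification is supported on null sets, both $h$ and $h^{-1}$ send measurable sets to measurable sets and still intertwine the relevant measures, so $h$ is a genuine measure‑preserving point isomorphism; trivially extending it, we may view $h$ as defined for $\mu$‑a.e.\ $x\in\R^n$.

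In Step~3 we transport the data and verify the isometry. Set $r:=p\circ h^{-1}$ and $\omega':=\omega\circ h^{-1}$ on $I$, so that $r(h(x))=p(x)$ and $\omega'(h(x))=\omega(x)$ for $\mu$‑a.e.\ $x$, and define $Tf:=f\circ h^{-1}$, i.e.\ $(Tf)(h(x))=f(x)$. Since $h$ respects null sets, $T$ is well defined on equivalence classes, linear, and a bijection with inverse $g\mapsto g\circ h$; being a positivity‑preserving composition operator it is automatically a lattice (order) isomorphism, and its surjectivity onto $L^{r(\cdot)}(I,\mathrm{Bor}(I),m_1,\omega')$ is immediate by pulling $G$ back to $G\circ h$. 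The change‑of‑variables formula for the measure‑preserving $h$ — the two‑space analogue of the computation underlying the rearrangement invariance of the Nakano norm — shows that $T$ carries the target modular of $Tf/\lambda$ to the source modular of $f/\lambda$ for every $\lambda>0$. Hence the sets $\{\lambda>0\colon\rho_{\mathrm{target}}(Tf/\lambda)\le1\}$ and $\{\lambda>0\colon\rho_{\mathrm{source}}(f/\lambda)\le1\}$ coincide for every $f$, so $\iii Tf\iii_{r(\cdot)}=\iii f\iii_{p(\cdot)}$; in particular $f$ lies in the variable Lebesgue space over $\R^n$ exactly when $Tf$ lies in the one over $I$, and $T$ is the asserted isometric isomorphism.

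The one step that is not pure bookkeeping is the passage from a measure‑algebra isomorphism to an honest \emph{point} bijection $h$ — it is this that makes the pointwise relations $r\circ h=p$ and $\omega'\circ h=\omega$ meaningful — and it rests on $\mathrm{supp}(\mu)$ being a standard Borel space, on $\mu$ being atomless and $\sigma$‑finite, and on the harmless cardinality adjustment of null sets. Once $h$ is in hand, the modular identity, and hence the norm identity, is a direct change of variables.
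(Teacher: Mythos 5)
Correct, and essentially the paper's own route: both arguments obtain a genuine point isomorphism of the underlying measure spaces from Fremlin's theory (the paper invokes Theorem 344I for the completed spaces --- atomless, perfect, complete, strictly localizable, countably separated, of equal magnitude --- which hands you the bijection $h$ outright, so your mod-$0$ isomorphism plus manual null-set patching is just a hands-on substitute) and then transfer $p$, $\omega$ and $f$ via the same measure-preserving change-of-variables computation for the modular, with surjectivity from the explicit inverse $g\mapsto g\circ h$. The one point to make explicit, which the paper handles by completing both measure spaces at the outset, is that your patched $h$ maps measurable sets to measurable sets only with respect to the completed $\sigma$-algebras (an arbitrary bijection between uncountable null sets need not respect Borel sets); this is harmless since the $L^{p(\cdot)}$ spaces consist of a.e.-equivalence classes.
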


\begin{proof}
Let us consider completed versions of both the measure spaces, $(\R^n , \Sigma , \mu)$ and 
$(I, \Sigma' , m)$.

The heart of the argument is Maharam's celebrated classification of measure algebras and their representation theory.
The topic of measure algebras is rather involved and it cannot be covered here in a self-contained fashion, thus 
we refer to \cite[Vol III]{Fremlin} for a detailed discussion. Measure algebras $(\mathfrak{A},\overline{\nu})$ 
can be realized (via Stone spaces) as quotients of suitable measure spaces $(\Omega,\mathcal{F},\nu)$,
\[\mathfrak{A} = \mathcal{F} / \mathrm{Null}(\nu),\]
where $\mathrm{Null}(\nu)$ is the sub-$\sigma$-ring of $\mathcal{F}$ consisting of sets $A$ such that $\nu(A)=0$.
Then $\mathfrak{A}$ becomes a $\sigma$-complete Boolean algebra in a natural way and a $\sigma$-additive mapping 
$\overline{\nu} \colon \mathfrak{A} \to [0,\infty]$ is canonically defined by $\overline{\nu} ([A])=\mu(A)$.
Note that $\Omega$ above has hardly any role in the measure algebras. However, when it does play a significant role,
this tends to be subtle.

Consider the probability spaces $([0,1]^n , \mathrm{Bor}([0,1]^n),m_n )$ and let $\mathfrak{A}$ and $\mathfrak{B}$ be the corresponding measure 
algebras with $n=3$ and $n=1$, respectively. These measure algebras are so-called Maharam homogenous and Maharam's
classification states that they are Boolean isomorphic to 
\[\{0,1\}^\omega \] 
normal form measure algebra where $\omega$ is the least infinite cardinal. This particular measure algebra is the classification of the above measure algebras and is generally known as 
\emph{the measure algebra}. More generally, the number $n \in \N$ does not affect the Maharam classification and this is crucial here.
The measure algebra corresponds to the standard probability space; the notation is instructive in this regard
in the sense that it can be modeled by an i.i.d. sequence of tosses of fair coins. Considering dydadic decompositions of, say, 
the unit square, and using it to model a sequence of independent coins gives some insight, why, modulo null sets, $n$ is irrelevant. 

Taking a quotient with respect to the null sets loses lots of information and this is the very reason why a general and elegent classification of measure algebras is possible, that is, with infinite cardinals $\kappa$ in place of $\omega$ in the homogenous case. Here we require a full isomorphism between measure spaces, rather than an induced Boolean isomorphism between the corresponding measure algebras. 

Measure spaces $(X, \mathcal{F}_1 ,\nu_1 )$ and $(Y, \mathcal{F}_2 , \nu_2 )$ are said to be isomorphic if there is a bijection $h\colon X \to Y$ such that 
$f(A) \in \mathcal{F}_2$ if and only if $A \in \mathcal{F}_1$ and then $\nu_2 (h(A))=\nu_1 (A)$ holds.
 
We will apply the following isomorphism result (see \cite[Vol. III, Thm. 344I]{Fremlin}):
Let $(X, \mathcal{F}_1 ,\nu_1 )$ and $(Y, \mathcal{F}_2 , \nu_2 )$ be atomless, perfect, complete, strictly localizable, countably separated measure spaces of the same non-zero magnitude. Then they are isomorphic. 

This of course implies that their measure algebras are isomorphic by a Boolean isomorphism induced by the measure space isomorphism. 
 Let us comment on these notions:
\begin{enumerate}
\item A measure space $(X, \mathcal{F} ,\nu)$ is atomless if for each $A \in   \mathcal{F}$, $\nu (A)>0$, there are disjoint $B,C \in  \mathcal{F}$ with $B\cup C = A$ and $\nu (B)>0$ and $\nu (C)>0$.
\item Given a topological space $X$, a Borel measure space $(X, \mathcal{F} ,\nu)$ is perfect if for each 
$A \in \mathcal{F}$ there are Borel sets $A_1 , A_2 \subset X$ such that $A_1 \subset A \subset A_2$ and 
$\mu(A_2 \setminus A_1 )=0$.  
\item A measure space is $(X, \mathcal{F} ,\nu)$ is $\sigma$-finite if there are $A_n \in \mathcal{F}$, $n\in\N$, 
with $\mu(A_n )<\infty$ and $X= \bigcup_n A_n$. This property implies strict localizability.
\item A measure space $(X, \mathcal{F} ,\nu )$ is complete if $A\subset B \in \mathcal{F}$ with $\nu (B)=0$ implies $A \in \mathcal{F}$.
\item A measure space $(X, \mathcal{F} ,\nu )$ is countably separated if there is a countable set 
$\mathcal{A} \subset  \mathcal{F}$ separating the points of $X$ in the sense that for any distinct 
$x, y \in X$ there is an $E \in \mathcal{A}$ containing one but not the other. 
(Of course this is a property of the structure $(X, \mathcal{F} )$  rather than of $(X, \mathcal{F} ,\nu )$.)
\item Two measure spaces have the same magnitude (e.g.) if they have the same total measure or if they both have infinite total measure 
and are $\sigma$-finite. 
\end{enumerate}
 
It is well-known that a completed Lebesgue measure space has these properties (regardless of the dimension $n$).
Then it is easy to see from the construction of the spaces $(\R^n , \Sigma , \mu)$ and $(I, \Sigma' , m)$ 
with $\mu(\R^n )=m(I)$ that they satisfy these conditions as well. Thus, according to the above measure space isomorphism result, 
\[\left(\R^n , \Sigma , \mu\right) \simeq \left(I, \Sigma' , m\right) \]
holds in the above sense and let $h \colon \R^n \to I$ be the bijection involved in the 
isomorphism. 
Now, if $p(\cdot)$, $\omega(\cdot)$ and $f\in L^{p(\cdot)} (\R^n, \mathrm{Bor}(\R^n), m_n , \omega) $ are given, 
then 
\[r(t) =p(h^{-1}(t)),\   \omega' (t) = \omega(h^{-1}(t)),\ (T f)[t]=f[h^{-1}(t)]\]
become measurable functions since $h^{-1}$ maps the sets in $\Sigma'$ to sets in $\Sigma$. 
By using the fact that $h$ is $\mu$-$m$-measure-preserving, we obtain by a change of variable for every 
$\lambda >0$  that 
\begin{multline*}
\int_{\R^n} \omega (x)\left| \frac{f(x)}{\lambda}\right|^{p(x)} \ d\mu(x)= 
\int_{ I } \omega (h^{-1}(t))\left| \frac{f(h^{-1}(t))}{\lambda}\right|^{p(h^{-1}(t))} \ dm(t)\\
=\int_{ I} \omega' (t)\left| \frac{(Tf)[t]}{\lambda}\right|^{r(t)} \ dm(t)
\end{multline*}
(possibly infinite) holds, where the Radon-Nikodym derivative $\frac{d(\mu \circ h^{-1})}{dm}\equiv 1$ can be disregarded. Thus it is easy to see that the above $T$ is a linear isometry   
$L^{p(\cdot)} (\R^n, \mathrm{Bor}(\R^n), m_n , \omega) \to L^{r(\cdot)} (I,\mathrm{Bor}\left(I\right) , \omega')$. In fact this is an isomorphism, surjectivity is easiest to check by 
noting that $T$ has an inverse given by the formula 
\[T^{-1} (g) [x] = g[h(x)].\]
\end{proof}

Moreover, with a similar reasoning as above, any Musielak-Orlicz space on 
$(\R^n , \mathrm{Bor}(\R^n ), m_n)$  allows a dimension reduction to the real line. 
Next we will reduce the measure space of our function space to the unit interval. 
We will consider the more complicated case with $\mu(\R^n)=\infty$, if $\mu$ is finite then a similar fact holds. 

\begin{proposition}\label{prop: }
Let $L^{r(\cdot)} ([0,\infty), \mathrm{Bor}\left([0,\infty) \right) , \omega')$ be as above and 
$h\colon \left[0,\infty\right)\to [0,1)$, $h(t)=1- \frac{1}{1+t}$. Then 
\[(S f)[h(t)] =(1+t)^{\frac{2}{r(t)}} f[t] \quad \text{for}\ m\text-{a.e.}\ t\in [0,\infty)\] 
defines an isometric isomorphism
\[T\colon L^{r(\cdot)} ([0,\infty), \mathrm{Bor}\left([0,\infty) \right) , m, \omega')
\to L^{q(\cdot)} ([0,1], \mathrm{Bor}\left([0,1] \right) , m, w) \] 
where 
\[q(h(t)) =r(t)\quad \text{and} \quad     w (h(t)) = \omega' (t) \quad \text{for}\ m\text{-a.e.}\ t\in [0,\infty) .\]
\end{proposition}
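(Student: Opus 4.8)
The claim is that the explicit change of variable $h(t) = 1 - \frac{1}{1+t}$, together with the multiplicative factor $(1+t)^{2/r(t)}$ absorbed into the function, produces an isometry between the two Musielak-Orlicz spaces. The strategy is the standard one for transferring a Musielak-Orlicz norm under a change of the underlying measure space: verify that the modular is preserved for every $\lambda > 0$, which immediately forces the Luxemburg norms to coincide, and then check bijectivity by exhibiting an inverse. First I would record the elementary facts about $h$: it is a $C^1$-diffeomorphism of $[0,\infty)$ onto $[0,1)$, strictly increasing, with $h'(t) = \frac{1}{(1+t)^2}$, so that the image Lebesgue measure pulls back via $\frac{d(m\circ h^{-1})}{dm}(s) = (1+t)^2$ where $s = h(t)$; equivalently $dm(s) = (1+t)^{-2}\, dm(t)$ under the substitution. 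The point of the exponent $\frac{2}{r(t)}$ in the definition of $S$ is precisely to cancel this Jacobian factor once it is raised to the power $r(t)$ inside the modular.

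The core computation runs as follows. Fix $\lambda > 0$ and $f \in L^{r(\cdot)}([0,\infty),\dots)$. Starting from the modular on $[0,1]$ and substituting $s = h(t)$, using $q(h(t)) = r(t)$ and $w(h(t)) = \omega'(t)$, one gets
\[
\int_{0}^{1} w(s) \left| \frac{(Sf)[s]}{\lambda} \right|^{q(s)} dm(s)
= \int_{0}^{\infty} \omega'(t) \left| \frac{(1+t)^{2/r(t)} f[t]}{\lambda} \right|^{r(t)} (1+t)^{-2}\, dm(t).
\]
Inside the integrand, $\left| (1+t)^{2/r(t)} f[t] \right|^{r(t)} = (1+t)^{2} |f[t]|^{r(t)}$, and the factor $(1+t)^{2}$ cancels the Jacobian $(1+t)^{-2}$, leaving exactly $\int_{0}^{\infty} \omega'(t) |f[t]/\lambda|^{r(t)}\, dm(t)$, the modular on $[0,\infty)$. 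Hence the two modulars agree (as elements of $[0,\infty]$) for every $\lambda$, so $\{\lambda > 0 : \rho(\text{on }[0,1]) \le 1\} = \{\lambda > 0 : \rho(\text{on }[0,\infty)) \le 1\}$ and the Luxemburg norms coincide; in particular $f \in L^{r(\cdot)}$ iff $Sf \in L^{q(\cdot)}$, and $S$ is a linear isometry onto its image. Linearity is clear since $h$ and the weight $(1+t)^{2/r(t)}$ do not depend on $f$.

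For surjectivity, the inverse is given explicitly by $(S^{-1} g)[t] = (1+t)^{-2/r(t)}\, g[h(t)]$ for $m$-a.e.\ $t$, which is well defined because $h$ is a bijection $[0,\infty) \to [0,1)$ (the single point $s=1$ being $m$-null and hence irrelevant), and the same change-of-variable identity shows $S S^{-1} = \mathrm{id}$ and $S^{-1} S = \mathrm{id}$. One should also note that $q$ and $w$ are genuinely measurable functions on $[0,1]$: since $h^{-1}$ is a Borel isomorphism, $q = r \circ h^{-1}$ and $w = \omega' \circ h^{-1}$ are Borel measurable, and $q(\cdot) \ge 1$ is inherited from $r(\cdot) \ge 1$. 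I do not expect any serious obstacle here; the only points requiring a modicum of care are the bookkeeping of the Jacobian (which is why the exponent is $\frac{2}{r(t)}$ and not, say, $\frac{1}{r(t)}$) and the harmless remark that the half-open versus closed interval discrepancy at the endpoint is measure-zero. (The statement also mentions that if $\mu$ is finite an analogous fact holds; there one would compose with an affine rescaling of $[0,\mu(\R^n))$ onto $[0,1)$, with a constant Jacobian, which is even simpler.)
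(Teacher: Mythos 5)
Your proposal is correct and follows exactly the route the paper intends: the paper only remarks that the factor $(1+t)^{2/r(t)}$, once raised to the power $q(h(t))=r(t)$, produces the weight $(1+t)^{2}$ compensating the Radon--Nikodym derivative $|h'(t)|=(1+t)^{-2}$ in the change of variable, and your modular computation (equality of modulars for every $\lambda>0$, hence equality of Luxemburg norms, plus the explicit inverse $(S^{-1}g)[t]=(1+t)^{-2/r(t)}g[h(t)]$) is precisely the omitted verification. Your handling of the null endpoint $s=1$ and the measurability of $q=r\circ h^{-1}$, $w=\omega'\circ h^{-1}$ is also consistent with the paper's framework.
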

We mainly omit the proof, just note that after taking the $q(h(t))=r(t)$ power in the target space the weight 
becomes $(1+t)^{2}$, this is the inverse of the Radon-Nikodym derivative of the measure transformation 
(plainly $|h' (t)|$) which is required as a 'compensator' in the change of variable.  
\medskip

Next we comment on the weight appearing in the Nakano norm. On one hand, this weight corresponds to a special state
in our ODE of interest, where the value of the solution $\varphi (t)=1$ has a 'neutral' effect on its own growth.
On the other hand, some form of weight compensation is required for the continuity of the modular. 

\subsection{Continuity of the modulars}\label{sect: CM}
Note that the map
\[\lambda \mapsto \int_{0}^1 \left|\frac{f(t)}{\lambda}\right|^{p}\ dt,\quad \lambda>0 \]
in the MO norm expressing the classical $L^p$ norm is in fact continuous with respect to $\lambda$. 
This appears more generally a reasonable requirement on the modulars, say, at least for bounded functions $f$. 
\begin{proposition}\label{prop: lambda}
We investigate below measurable functions $f\colon [0,1]\to \R$, $p\colon [0,1]\to [1,\infty)$, 
$w\colon [0,1]\to (0,\infty)$ and reals $\lambda>0$. Consider the following conditions:
\begin{enumerate}
\item There exists $c>1$ and $C>0$ such that 
\[w(t)\leq \frac{C}{c^{p(t)}}\quad \text{for\ a.e.}\ t\]
\item For each $f$ such that $\int_{0}^1 |f(t)|^{p(t)}\ dt \leq 1$ the mapping 
\[\lambda \mapsto \int_{0}^1 w(t)\ \frac{|f(t)|^{p(t)}}{\lambda^{p(t)}}\ dt\]
is continuous at $\lambda=1$.
\item There exists $D>0$ such that 
\[w(t)\leq \frac{D}{p(t)}\quad \text{for\ a.e.}\ t.\]
\end{enumerate}
Then $(1) \implies (2) \implies (3)$. Moreover, if $p$ is essentially bounded and $\int_{0}^1 |f(t)|^{p(t)}\ dt=1$, then 
\[\frac{d}{d\lambda} \int_{0}^1 \frac{1}{p(t)} \frac{|f(t)|^{p(t)}}{\lambda^{p(t)}}\ dt\bigg\vert_{\lambda=1}
=- 1.\]
\end{proposition}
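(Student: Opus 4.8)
The plan is to prove the three implications $(1)\Rightarrow(2)\Rightarrow(3)$ in turn, and then compute the derivative separately. For $(1)\Rightarrow(2)$, I would fix $f$ with $\int_0^1 |f(t)|^{p(t)}\,dt\le 1$ and study the integrand $g_\lambda(t) := w(t)|f(t)|^{p(t)}\lambda^{-p(t)}$ as $\lambda\to 1$. Pointwise convergence $g_\lambda(t)\to g_1(t)$ is clear since $\lambda^{-p(t)}\to 1$. The key is a dominating function: for $\lambda$ in a neighbourhood of $1$, say $\lambda\in[\tfrac12,2]$, we have $\lambda^{-p(t)} \le 2^{p(t)}$, so $g_\lambda(t) \le w(t)\,2^{p(t)}|f(t)|^{p(t)}$. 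Condition (1) gives $w(t)2^{p(t)} \le C(2/c)^{p(t)}$; if $c>2$ this is bounded by $C$ and we simply dominate by $C|f(t)|^{p(t)}$, which is integrable. For the general case $c>1$ one should instead restrict to an even smaller neighbourhood: choose $\lambda\in[c^{-1/2},c^{1/2}]$ (intersected with, say, $[\tfrac12,2]$ is not needed), so $\lambda^{-p(t)}\le c^{p(t)/2}$ and $g_\lambda(t)\le C c^{-p(t)/2}|f(t)|^{p(t)} \le C|f(t)|^{p(t)}$ since $c^{-p(t)/2}\le 1$. Now the Dominated Convergence Theorem gives continuity at $\lambda=1$. (One may also want the two-sided statement; continuity from the left for $\lambda<1$ is the delicate side and is exactly what the domination handles.)

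For $(2)\Rightarrow(3)$, I would argue by contraposition: suppose no such $D$ exists, i.e. $\esssup_t\, w(t)p(t) = \infty$. Then I construct a single bad function $f$. Pick a sequence of points (really, small disjoint sets) $A_k$ of positive measure on which $w(t)p(t)\ge 4^k$, say, and with $m(A_k)$ chosen small enough that putting $|f|^{p(t)} = \beta_k$ (a constant) on $A_k$ makes $\sum_k \beta_k m(A_k) \le 1$ while $\sum_k \beta_k m(A_k) w(t)p(t)|_{A_k}$ — or rather the quantity controlling the one-sided derivative — diverges. Concretely, I want $\int w|f|^{p(t)}\,dt < \infty$ but the left-difference quotient $\lambda^{-1}\big(\int w|f|^{p}\lambda^{-p} - \int w|f|^p\big)$ to blow up as $\lambda\uparrow 1$, which forces discontinuity (or at least failure of the modular to behave; more carefully, failure of (3) lets the integral $\int w(t)|f(t)|^{p(t)}\lambda^{-p(t)}\,dt$ jump to $+\infty$ for $\lambda$ slightly below $1$, using that $\lambda^{-p(t)}$ grows like $e^{p(t)|\log\lambda|}$ and can be made to beat any fixed integrable profile when $p(t)$ is unbounded on the sets where $w$ is large). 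The cleanest route: on each $A_k$ take $p(t)\equiv p_k\to\infty$ and $w(t)\equiv w_k$ with $w_k p_k\to\infty$; set $|f|^{p_k} = 1/(w_k p_k \, m(A_k))$... I would tune these so that the base integral is finite but the perturbed one is infinite for every $\lambda<1$. The main obstacle in the whole proof is getting this construction in $(2)\Rightarrow(3)$ right, since one needs simultaneous control of three competing quantities (finiteness of the unperturbed modular, divergence after perturbation, and the normalization $\int|f|^p\le1$), and the interplay between the unboundedness of $p$ and the smallness of the weight is exactly what makes it subtle.

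For the final derivative computation, assume $p$ essentially bounded, $\overline p = \esssup p < \infty$, and $\int_0^1 |f(t)|^{p(t)}\,dt = 1$. I would differentiate under the integral sign: formally $\frac{d}{d\lambda}\,\lambda^{-p(t)} = -p(t)\lambda^{-p(t)-1}$, so
\[
\frac{d}{d\lambda}\int_0^1 \frac{1}{p(t)}\,\frac{|f(t)|^{p(t)}}{\lambda^{p(t)}}\,dt
= -\int_0^1 \frac{|f(t)|^{p(t)}}{\lambda^{p(t)+1}}\,dt,
\]
which at $\lambda=1$ equals $-\int_0^1 |f(t)|^{p(t)}\,dt = -1$. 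To justify the interchange of derivative and integral it suffices to bound the $\lambda$-derivative of the integrand uniformly on a neighbourhood of $\lambda=1$ by an integrable function: for $\lambda\in[\tfrac12,2]$ one has $|{-}p(t)\lambda^{-p(t)-1}|/p(t) \cdot |f(t)|^{p(t)} = \lambda^{-p(t)-1}|f(t)|^{p(t)} \le 2^{\overline p+1}|f(t)|^{p(t)}$, and the right side is integrable precisely because $\int|f|^{p(t)} = 1 < \infty$. Hence the standard differentiation-under-the-integral theorem applies and gives the stated value $-1$. This last part is routine once the boundedness of $p$ is in hand.
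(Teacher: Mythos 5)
Your treatment of $(1)\Rightarrow(2)$ and of the final derivative formula is correct: the domination $w(t)\lambda^{-p(t)}|f(t)|^{p(t)}\le C\,c^{-p(t)/2}|f(t)|^{p(t)}\le C|f(t)|^{p(t)}$ for $\lambda\in[c^{-1/2},c^{1/2}]$ makes dominated convergence work, and for essentially bounded $p$ the bound $\lambda^{-p(t)-1}|f(t)|^{p(t)}\le 2^{\overline{p}+1}|f(t)|^{p(t)}$ for $\lambda\in[1/2,2]$ justifies differentiation under the integral sign, giving the value $-\int_0^1|f(t)|^{p(t)}\,dt=-1$. (The paper states this proposition without a proof, so there is nothing to compare against on that side.)

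The genuine gap is in $(2)\Rightarrow(3)$, and it is not merely a matter of "tuning" your construction. Negating (3) only says that $w(t)p(t)$ is essentially unbounded; it does not give you sets on which $p$ itself is large, yet your whole mechanism (modular finite at $\lambda=1$ but jumping to $+\infty$ for $\lambda<1$) requires $p$ to be unbounded on the support of the bad function, since for $p$ bounded there the factor $\lambda^{-p(t)}$ stays bounded near $\lambda=1$. Concretely, take $p\equiv1$ and $w(t)=1/t$: condition (3) fails, but for every admissible $f$ the map in (2) is $\lambda\mapsto\lambda^{-1}\int_0^1 w|f|\,dt$, which is either finite and continuous or identically $+\infty$ on a neighbourhood of $\lambda=1$; no finite-to-infinite jump can be produced. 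A complete proof therefore needs a dichotomy. If $w$ is essentially unbounded on some $\{p\le M\}$, you must instead exhibit $f$ supported there with $\int_0^1|f|^{p(t)}\,dt\le1$ but $\int_0^1 w|f|^{p(t)}\,dt=\infty$ (put $|f|^{p(t)}=\beta_k$ on disjoint positive-measure sets where $w\ge2^{2k}$, with $\beta_k m(A_k)\asymp2^{-k}$), and you must read (2) as asserting a \emph{finite-valued} continuous map — as an extended-real-valued function the constant $+\infty$ is continuous, so this reading is needed for the implication to hold at all. In the complementary case $wp$ is essentially unbounded on $\{p>M\}$ for every $M$, and there your idea does work once the choices are pinned down: take pairwise disjoint positive-measure $A_k$ on which $p>k^2$ and $wp\in[T_k,2T_k]$ with $T_k\ge4^k$, and set $|f|^{p(t)}=\beta_k$ on $A_k$ with $\beta_k m(A_k)=1/T_k$; then $\sum_k\beta_k m(A_k)\le1$, $\int_0^1 w|f|^{p(t)}\,dt\le\sum_k 2/k^2<\infty$ (because $w\le 2T_k/p\le 2T_k/k^2$ on $A_k$), while for each fixed $\lambda<1$ the $k$-th contribution to $\int_0^1 w|f|^{p(t)}\lambda^{-p(t)}\,dt$ is at least $\lambda^{-k^2}/k^2\to\infty$, since $w\lambda^{-p}\ge T_k\lambda^{-p}/p\ge T_k\lambda^{-k^2}/k^2$ there for large $k$. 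Your proposed normalization $|f|^{p_k}=1/(w_kp_k m(A_k))$ points in this direction, but without the case split (and with $p$, $w$ given functions rather than adjustable constants) the middle implication is not yet proved.
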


The latter observation above does not fully motivate by itself the use of the weight $w$, since we made the ad hoc assumption that
$\int_{0}^1 |f(t)|^{p(t)}\ dt \leq 1$. However, continuing our heuristic line of reasoning, the constant functions should be
canonical enough a test bed for assessing the behavior of weight functions. We note that the continuity of 
\[\lambda \mapsto \int_{0}^1 w(t)\ \frac{1}{\lambda^{p(t)}}\ dt\]
at $\lambda=1$ implies 
\[\int_{0}^1 p(t) w(t)\ dt <\infty .\]
Here the value $\lambda=1$ is plausible, 
\[\int_{0}^1 w(t)\frac{1}{\lambda^{p(t)}}\ dt>1\ \Leftrightarrow\ \lambda<1, \]
for instance, if  
\[ \frac{C}{(p(t))^{\alpha}} \leq w(t) \leq 1\] 
for some constants $C,\alpha >0$ and $p$ tends to $\infty$ suitably slowly.

\subsection{Connections between the ODE-determined norm and the modular} 
We may express the ODE-determined norm in a manner similar to the Luxemburg norm. Admittedly, this involves choosing the weight in a very liberal way, somewhat in the same style as in Proposition \ref{prop: lambda}. We 
may write

\begin{eqnarray*}
\|f\|_{p(\cdot)}&=&\varphi_f (1)\\
&=& \inf\left\{\lambda>0\colon \frac{1}{\lambda} \varphi_f (1)\leq 1\right\}\\
&=& \inf\left\{\lambda>0\colon \frac{1}{\lambda}\int_{0}^1 \frac{|f(t)|^{p(t)}}{p(t)} \varphi_{f}^{1-p(t)} (t) \ dt\leq1\right\}\\
&=& \inf\left\{\lambda>0\colon \int_{0}^1 \frac{(\varphi_{f}(t)/\lambda )^{1-p(t)}}{p(t)}\ (|f(t)|/\lambda)^{p(t)}\ dt\leq 1\right\}\\
&=&\inf\left\{\lambda>0\colon \int_{0}^1 \frac{1}{p(t)(\varphi_{f/\lambda}(t))^{p(t)-1}}\ (|f(t)|/\lambda)^{p(t)}\ dt\leq1\right\}.\\
\end{eqnarray*}

Here 
\[\frac{d}{d\lambda} \int \frac{1}{p(t)(\varphi_{f/\lambda}(t))^{p(t)-1}}\ (|f(t)|/\lambda)^{p(t)}\ dt
=-\frac{1}{\lambda^2}\int \frac{|f(t)|^{p(t)}}{p(t)} \varphi_{f}^{1-p(t)} (t) \ dt\]
and for $\lambda=\|f\|_{p(\cdot)}$ the above reads $=- \frac{1}{\|f\|_{p(\cdot)}}$.
So, in this case the modular does not merely define the norm by means of a level set, but it actually behaves locally 
according to the required norm. 

Approaching the connection between ODE-determined norms and Luxemburg norms from another direction, suppose that for some weight function $w(t)>0$ and scalar $\lambda>0$ we have
\[\int_{0}^1 w(t) \frac{|f(t)|^{p(t)}}{\lambda^{p(t)}}\ dt=1.\]
This can be rewritten as 
\[\int_{0}^1 w(t)\ |f(t)|^{p(t)}\  \lambda^{1-p(t)}\ dt=\lambda.\] 
Replacing $f$ with $1_{[0,t]}f$, $0\leq t \leq 1$, leads to separate respective solutions
$\lambda_t$ with
\[\lambda_t= \int_{0}^{t} w(s)\ |f(s)|^{p(s)}\  (\lambda_t )^{1-p(s)}\ ds .\]
Heuristically speaking, the scalars $\lambda_t$ may be considered some kind of averages of more localized constants or a function with a similar role. Let us further localize these scalars by defining 'a varying lambda' function $\lambda(t)$ as the solution to 
\[\lambda(t) = \int_{0}^{t} w(s)\ |f(s)|^{p(s)}\  (\lambda(s))^{1-p(s)}\ ds ,\]
if such a weak solution exists, thus with a weak formulation
\begin{equation}\label{eq: lambda}
\lambda'(t) = w(t)\ |f(t)|^{p(t)}\  (\lambda(t))^{1-p(t)}\ \text{a.e.}
\end{equation}
Let us see what happens if $p(t)=p$ is a constant and we choose $w(t)=\frac{1}{p}$. Then
\[p\ (\lambda(t))^{p-1} \ \lambda'(t) = |f(t)|^{p(t)}\ \text{a.e.}\]
This yields
\begin{eqnarray*}
\int_{0}^t p\ (\lambda(s))^{p-1} \ \lambda'(s)\ ds &=& \int_{0}^t  |f(s)|^{p}\ ds,\\
(\lambda(t))^p &=& \int_{0}^t  |f(s)|^{p}\ ds\\
\lambda(t) &=& \|1_{[0,t]} f\|_p .
\end{eqnarray*}
This is compatible with the philosophy of how the $\lambda$:s are defined and used,
\[\left\|\frac{1_{[0,t]} f}{\lambda(t)}\right\|=1.\]

The conclusion is that our solutions $\varphi_f$ and the 'varying lambdas' coincide when the weight in \eqref{eq: lambda} is chosen to be $w(t)=\frac{1}{p(t)}$.

\subsection*{Acknowledgments}
%I thank the anonymous referees for useful comments which helped to improve this paper significantly.

This work has received financial support from the V\"ais\"al\"a foundation, the Finnish Cultural Foundation and the Academy of Finland Project \# 268009.

\end{document}